\documentclass[10pt]{amsart}

\usepackage[a4paper]{geometry}

\usepackage{graphicx}
\usepackage{float}
\usepackage{amsmath,amsthm, amsfonts, amssymb}

\newtheorem{theorem}{Theorem}[section]
\newtheorem{lemma}{Lemma}[section]
\newtheorem{proposition}{Proposition}[section]

\theoremstyle{definition}
\newtheorem{definition}{Definition}[section]
\theoremstyle{remark}
\newtheorem{remark}{Remark}[section]

\newcommand{\real} {\mathbb{R}}
\newcommand{\enteros} {\mathbb{Z}}

\newcommand{\mapto}{\longrightarrow}

\newcommand{\itself}{\circlearrowleft}

\newcommand{\diam}{\mbox{diam}\, }
\newcommand{\dist}{\mbox{dist}\, }
\newcommand{\Vol}{\mbox{Vol}\, }

\title{Yet another induction scheme \\ for non-uniformly expanding transformations}
\author{Pedro L. Capett-Figueras \& \\ Fernando J. S\'anchez-Salas}

\address{Departamento de Matem\'aticas, Facultad Experimental de Ciencias, Universidad del Zulia, Avenida Universidad, Edificio Grano de Oro, 
Maracaibo, Venezuela}
\email{pcapett@demat-fecluz.org / fjss@fec.luz.edu.ve}

\date{February 25, 2017}

\subjclass[2010]{37C40, 37D35}

\keywords{Nonuniformly expanding, positive Lyapunov exponents, absolutely continuous invariant measures, inducing schemes, Markov structures}


\begin{document}

\begin{abstract}
We introduce a new induction scheme for non-uniformly expanding maps $f$ of compact Riemannian manifolds, relying upon ideas 
of \cite{sanchez.salas.2003} and \cite{doob.2012}. We use this induction approach to prove that the existence of a 
Gibbs-Markov-Young structure is a necessary condition for $f$ to preserve an absolutely continuous probability with all its 
Lyapunov exponents positive.
\end{abstract}

\maketitle

\section{Introduction}
A Borel probability $\mu$ is $f$-invariant if $\mu(f^{-1}(B)) = \mu(B)$ for every Borel subset $B \subset M$. Invariant 
probabilities provide quantitative information about the spatial distribution of orbits of $f$ and are crucial in the study 
of statistical properties of the dynamics. See \cite{manhe} and \cite[Chapter 4]{katok.hasselblat}.

Absolutely continuous invariant measures (\textbf{acim}) are of interest because they are generated by sets of positive Lebesgue measure, 
that is, \emph{they are experimentally observable}.

This paper is concerned with \emph{induction}, a useful notion aimed at the study of ergodic properties of dynamical systems. Namely, 
suppose that every point in a non-empty subset $B \subset M$ returns infinitely many often to $B$. This is the case, for example, 
of a set $B$ with $\mu(B) > 0$, by Poincar\'e's recurrence theorem. See \cite{manhe}. Then it is natural to consider the 
\emph{induced dynamical system}  $f^R : B \to B$ defined as a \emph{first return map} $f^R(x) = f^{R(x)}(x)$, 
where $R(x) = \min\{n > 0: f^n(x) \in B\}$ is the \emph{first return times} from $x \in B$ to $B$. 

First return maps were originally introduced by Poinca\-r\'e to study nearby orbits of limit cycles. It was later, in the work of Kakutani \cite{kakutani.1943}, 
that they revealed their usefulness in the study of ergodic properties of the dynamics. Transformations induced by more 
general return times were also considered in the work of J. Neveu and Schweiger between 1969 and 1981. See references in \cite{pesin.senti.zhang.2016}.

To the best of our knowledge, Jakobson's theorem \cite{jakobson.1981} was the first to show the close connection between induced maps 
and the existence of \textbf{acim}. Jakobson's theorem says that given a one-parameter family of real quadratic maps $f_a(x) = 1 - ax^2$, 
there exists a Cantor set $\Lambda$ with positive Lebesgue measure such that for every $a \in \Lambda$, $f = f_a$ has an ergodic \textbf{acim} 
with positive Lyapunov exponent. See \cite[Theorem A, Theorem B]{jakobson.1981}.

The idea of the proof is to construct $\Lambda$ in such way that for every $a \in \Lambda$, there exists a closed interval $J$ and an induced 
expanding Markov transformation $f^R : \bigcup_iJ_i \to J$ with bounded distortion (see subsection \ref{subsec:enunciados}) 
and integrable return times $R$. By a folklore theorem $f^R$ has a (unique) ergodic $f^R$-invariant \textbf{acim}. Integrability of return times 
permits to ``coinduce'' a $f$-invariant \textbf{acim} $\mu$, by an standard procedure (see below). We refer to Yoccoz's College de France manuscript 
\cite{yoccoz} for an up-to-date and quite clear exposition of this celebrated result.

Jakobson's paper inspired a number of results concerning the application of induction method to prove the existence of 
\textbf{acim} for different classes of one-dimensional dynamical systems. See for instance  \cite[Chapter V, Section 3]{demelo}, 
\cite{guckenheimer.johnson}, \cite{jakobson.swiatek}, \cite{martens} and \cite{vargas}.

The next breakthrough in the induction approach was Lai Sang Young's \emph{``horseshoes with infinitely many branches and variable 
return times''}. See \cite{young.1998}, \cite{young.1999}. Generally speaking, Young's horseshoes are Cantor sets endowed with an induced 
return map with good hyperbolic and distortion properties with respect to a reference measure $m$. The return structures are made of infinitely many 
'vertical strips' mapped onto  'horizontal strips' similarly to Smale's horseshoes. Projecting along stable invariant manifolds one 
gets a uniformly expanding map with good distortion properties $f^R: \bigcup_i\Delta_i \to \Delta$ defined over a decomposition of a Cantor set $\Delta$ 
with positive reference measure. These Markov structures are used to prove the existence of equilibrium measures, showing the connection 
between the speed of convergence to the equilibrium and its rates of mixing with the rate of decaying of return times tails.

The following is a partial sample of the growing body of literature on the applications of also called \emph{Markov towers} or 
\emph{Gibbs-Markov-Young structures} (GMY) in the study of statistical properties of the dynamics.  See \cite{alves.dias.luzzatto}, 
\cite{alves.luzzatto.pinheiro}, \cite{benedicks.young}, \cite{bruin.1995}, \cite{bruin.luzzatto.vanstrien}, \cite{chernov.young.2001}, \cite{doob.2012}, 
\cite{gouezel}, \cite{melbourne.2005}, \cite{melbourne.2009}, \cite{pesin.senti.2005}, \cite{pesin.senti.2008}, \cite{pesin.senti.zhang.2008}, 
\cite{pesin.senti.zhang.2016}, \cite{przyticky.rivera-letelier.2007}, \cite{przyticky.rivera-letelier.2011}.

In \cite{young.1998} Young describes informally diffeomorphisms exhibiting GMY structures as those being \emph{``hyperbolic in large 
part of the phase space without being uniformly hyperbolic''}, however, up to the best of our knowledge, a precise characterization of the systems 
displaying Markov structures is still lacking.

Recently, it was conjectured that a \emph{$C^{1+\alpha}$ diffeomorphism of compact Riemannian 
manifold has a GMY structure if and only if it is non-uniformly hyperbolic}, meaning a system with an invariant measure with non zero 
Lyapunov exponents. The conjecture was proved in \cite{alves.dias.luzzatto} for a large class of non-uniformly expanding systems. In particular, 
let $f$ be a $C^{1+\alpha}$ map of compact Riemannian manifold of arbitrary dimension with critical points. If 
the criticalities satisfy non-degeneracy conditions similar to non-flatness in one-dimensional maps, then it is proved that $f$ preserves an 
ergodic \textbf{acim} $\mu$ with all its Lyapunov exponents positive if and only if it there exists a GMY structure 
having Lebesgue as a reference measure. If, in addition, $f$ has singularities, then a similar result was proved whenever $\mu$ satisfies a 
regularity condition. See \cite[Theorem 2, Theorem 3]{alves.dias.luzzatto} and subsection \ref{subsec:enunciados}. 

The paper \cite{alves.dias.luzzatto} proposes a simplified and conceptually clearer method to induce an expanding Markov map with good 
distortion properties, in comparison with the technicalities involved in most of the results on the subject. The proposed algorithm is essentially 
the most naive strategy to construct an induced expanding transformation in the non-uniformly expanding setting. The idea is to fix an open ball 
$B$ and iterate it until the return of some subset of $B'\subset B$ covers $B$, expanding with bounded distortion. The set $B'$ thus selected becomes 
an element of a partition and the argument is repeated for the points that were outside $B'$. However these returns have, in general, overlapping 
domains. This requires an additional effort to construct a countable family of expanding branches $f^{R_i}: B_i \to B$ with pairwise disjoint 
domains covering a full measure subset of $B$, assuring the integrability of the return times. \cite[Section 4.1]{alves.dias.luzzatto}.

We introduce in this paper an alternative induction scheme, motivated by results of \cite{sanchez.salas.2003}, 
giving an independent proof of \cite[Theorem 2, Theorem 3]{alves.dias.luzzatto}. Roughly speaking our induced maps are projections of the first 
return map of the natural extension $F: Z \to Z$ of $f$ to a suitable subset $\Sigma \subset Z$. Schematically, 
\begin{equation}\label{diagrama.fjss}
\begin{array}{ccc}
(Z,F) & \stackrel{\text{first return}}{\longrightarrow} & (\Sigma,F^{\bar{R}})\\
 \pi\downarrow &                                       & \downarrow \pi_{\Sigma}\\
 (M,f)  & \stackrel{\text{induction}}{\longrightarrow}        & (B,f^R)
\end{array} 
\end{equation}
See subsection \ref{subsec:estrategia}. 

Crucially, $\Sigma$ is chosen to be a \emph{'rectangle with regular returns'} (\textbf{RRR}). These geometrical structures were introduced 
in \cite{doob.2012} to study of ergodic properties of certain equilibrium states of non-uniformly expanding rational maps of the Riemann sphere 
proving, among other things, the existence of Markov structures with a non-exceptional conformal measure as reference.

Although \cite{doob.2012} is mostly based on geometrical and analytical properties of conformal maps, 
the proof of the existence of a \textbf{RRR} essentially relies upon smooth ergodic theory methods. This suggests the possibility to extend its 
construction for a more general class of non conformal mappings. This is precisely what we do for non-uniformly expanding maps with 
non-degenerate criticalities and/or singularities.

We should mention too \cite{bruin.1995} as an immediate antecedent to \cite{sanchez.salas.2003}. There the question was raised as to whether 
the integrability of the return times of an induced Markov map is a necessary condition for the existence of an \textbf{acim} $\mu$. It is 
proved that this is indeed the case, for interval piecewise monotonic maps preserving a conservative \textbf{acim} with positive entropy, 
whenever the induced Markov map satisfies some natural condition.

The induction method proposed in \cite{bruin.1995} can be summarized in the following diagram:
\begin{equation}\label{diagrama.bruin}
\begin{array}{ccc}
(\hat{I},\hat{f}) & \stackrel{\text{first return}}{\longrightarrow} & (\hat{J},\hat{f}^R)\\
 \pi\downarrow &                                       & \downarrow \pi \\
 (I,f)  & \stackrel{\text{induction}}{\longrightarrow}        & (J,f^R)
\end{array} 
\end{equation}
Here $\hat{f}: \hat{I} \to \hat{I}$ is a \emph{Markov extension} of $f : I \to I$, a construction due to Keller and Hofbauer. 
$\hat{f}: \hat{I} \to \hat{I}$ is a Markov transformation of a non-compact metric space $\hat{I}$ endowed with a naturally-defined countable 
partition defined using the monotonicity intervals of $f$. Moreover $\hat{I}$ comes endowed with a continuous and surjective map $\pi: \hat{I} \to I$ 
commuting with $f$ and $\hat{f}$, that is, $f \circ \pi = \pi \circ \hat{f}$. It is proved that if $f^R : J \to J$ 
is \emph{natural or naturally extendible} then there exists an interval $\hat{J} \subset \hat{I}$ making this diagram commutative. 
An induced Markov map is natural or naturally extendible if the return times are as small as possible and the branch-domains are as large as 
possible. See \cite[Lemma 2]{bruin.1995}. If $f$ preserves a conservative \textbf{acim} then the return times of a natural or naturally 
extendible induced Markov map is Lebesgue integrable. See \cite[Theorem 1]{bruin.1995}. 

This induction approach has received much attention recently in the work of Pesin, Senti and Zhang on the thermodynamics of non-uniform 
hyperbolic systems. See \cite{pesin.senti.2005}, \cite{pesin.senti.2008}, \cite{pesin.senti.zhang.2008}, \cite{pesin.senti.zhang.2016}.

The induction method presented in this paper has some advantages over \cite{bruin.1995} approach, at least for smooth non-uniformly expanding maps.
To begin with the phase space of the natural extension is compact, in contrast with Keller-Hofbauer's construction. Moreover, every measure $f$-invariant $\mu$ admits a (unique!) 
lift $\bar\mu$ to $Z$ meanwhile non every $f$-invariant Borel probability admits an elevation to a $\hat{f}$-invariant measure in the Markov 
extension. Moreover, our induced maps may or may not be natural or naturally extendible, a technical condition required to carry on the 
induction process under \cite{bruin.1995} premises.

Finally, let us underline that the present approach can be used to prove the existence of Markov structures for maps preserving a singular 
(wrt Lebesgue) $f$-invariant Borel probability $\mu$ with positive entropy and all its Lyapunov exponents positive. The question here is the 
choice of a good reference measure. We will discuss this problem in a forthcoming paper \cite{cappett.sanchez.2016}.

\subsection{Statement of main result}\label{subsec:enunciados}

\begin{definition}
We will say that a probability of Borel $f$-invariant $\mu$ is \emph{expanding} if its Lyapunov exponents 
\begin{equation}\label{lyapunov.exponents}
\chi ^+(x,v) = \lim_{n \to +\infty}\dfrac{1}{n}\log\|Df^n(x)v\|
\end{equation}
exists and they are positive for every $v \in T_xM - \{0\}$ and for $\mu$-a.e. $x \in M$.
\end{definition}

\begin{definition}\label{definicion.torre.markov}
Let $B \subset M$ an open simply connected subset. We say that $f^R: \bigcup_iB_i \to B$ is an 
\emph{induced expanding Markov map with bounded distortion and return times $R$} if 
$\{B_i \subset B\}$ is a collection of pairwise disjoint open subsets covering $B$ up-to zero volume, such that $R|B_i = R_i$ and 
$f^{R_i}: B_i \to B$ are $C^2$ expanding diffeomorphisms with bounded distortion, for every $i > 0$. That is, there exists constants 
$\lambda > 1$ and $C>0$ such that
\begin{equation}\label{rama.expansora}
  \|Df^{R_i}(x)v\| \geq \lambda\|v\|, \quad\forall \ v \in T_xM, \ \forall \ x \in {B}_i;
 \end{equation}
 \begin{equation}\label{distorsion.limitada}
  \dfrac{Jf^{R_i}(x)}{Jf^{R_i}(y)} \leq \exp(Cd(f^{R_i}(x),f^{R_i}(y))), \quad\forall \ x,y \in {B}_i, \  \forall \ i > 0,
 \end{equation}
 where $Jf = |\det(Df)|$ is the Jacobian of $f$ with respect to the Riemannian volume. We say that \emph{$f^R: \bigcup_iB_i \to B$ has an 
 integrable return times} if 
 \begin{equation}\label{retorno.integrable}
  \int{R}d\Vol = \sum_iR_i\Vol(B_i) < +\infty.
 \end{equation}
\end{definition}

We shall refer to an induced expanding Markov map $f^R: \bigcup_iB_i \to B$ with bounded distortion and integrable return times as a 
\emph{GMY structure with $\Vol$ as reference measure}.

Let $f^R: \bigcup_iB_i \to B$ be a GMY structure. By a folklore theorem there exists a unique ergodic, expanding 
$f^R$-invariant \textbf{acim} $\mu^*$ with a density bounded away from zero and $\infty$. In particular, $R$ is integrable wrt $\mu^*$, i.e. 
$\int{R}d\mu^* < +\infty$. We say that \emph{a $f$-invariant Borel probability $\mu$ is coinduced or generated} by $f^R: \bigcup_iB_i \to B$ if
$$
\mu = \dfrac{\sum_i\sum_{j=0}^{R_i-1}\mu^* \circ f^j}{\sum_iR_i\mu^*(B_i)}.
$$
It that case $\mu$ is expanding and ergodic, as it is easy to check. See \cite[Section 2.9]{yoccoz} or \cite[Lemma 3.1]{demelo}.

The following integrability condition is widely used in smooth ergodic theory and plays a crucial role in our approach.

\begin{definition}
We will say that a Borel $f$-invariant probability is \emph{regular} if 
\begin{equation}\label{hipotesis.R.1}
\int\max\{\log\|Df(x)\|, \log\|Df(x)^{-1}\|\}d\mu(x) < +\infty.
\end{equation}
\end{definition}
\ 
\\
\\
Next are the non-degeneracy conditions for critical and/or singular points.
\ 
\\
\\
\textbf{Non-degenerate critical points} \ \ \emph{We will say that 
$$
\mathcal{C} = \{x \in M: \text{ker}Df(x) \neq 0\},
$$
the critical point set, is \emph{not degenerate} if they exist $\beta, A, k > 0$ and an open neighborhood $V$ of $\mathcal{C}$ such that:} 
\begin{equation}\label{hipotesis.R.2}
 0 < \sup_{x \in V}\left|\dfrac{\|Df(x)^{-1}\|^{-1}}{d(x,\mathcal{C})^{\beta}}\right| < +\infty
\end{equation}
\begin{equation}\label{hipotesis.R.3}
\left|\log|\det(Df(x))| - \log|\det(Df(y))|\right| \leq A\frac{d(x,y)}{d(x,\textit{C})^{k}}, \quad\forall \ x,y \in M. 
\end{equation}
\ 
\\
\\
\textbf{Non-degenerate singular points} \ \emph{If $f$ has singularities 
$$
\mathcal{S} = \{x : \|Df(x)\| = +\infty \ \text{or is not defined}\}
$$
we will say that these are \emph{non-degenerate} if (\ref{hipotesis.R.3}) holds and, instead (\ref{hipotesis.R.2}),
\begin{equation}\label{hipotesis.R.4}
 0 < \sup_{x \in W}\left|\dfrac{\|Df(x)\|}{d(x,\mathcal{S})^{-\beta}}\right| < +\infty,
\end{equation}
where $W$ is an open neighborhood of $\mathcal{S}$.}
\ 
\\

The next is our main result.

\begin{theorem}\label{main.0}
Let $f : M \to M$ be a $C^2$ transformation with non-degenerate criticalities and/or singularities that preserves an ergodic, expanding, 
regular \textbf{acim} $\mu$. Then there exists a GMY structure $f^R: \bigcup_iB_i \to B$ generating $\mu$.
\end{theorem}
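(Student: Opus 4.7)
The plan is to realize the strategy sketched in diagram (\ref{diagrama.fjss}). First I would construct the natural extension $F : Z \to Z$ of $f$ as the shift on the space of pre-histories $Z = \{(x_n)_{n \leq 0} : f(x_n) = x_{n+1}\} \subset M^{\mathbb{Z}_{\leq 0}}$, together with the canonical projection $\pi : Z \to M$, $(x_n) \mapsto x_0$, so that $f \circ \pi = \pi \circ F$. The hypothesized ergodic regular expanding $\mu$ lifts to a unique ergodic $F$-invariant probability $\bar\mu$ on $Z$ with $\pi_*\bar\mu = \mu$. Since all Lyapunov exponents of $\mu$ are positive and $\mu$ is regular (\ref{hipotesis.R.1}), Pesin theory applies to $(F,\bar\mu)$: $\bar\mu$-a.e. $\bar x$ has a local unstable manifold $W^u_{\mathrm{loc}}(\bar x)$ whose image under $\pi$ is an open disk in $M$, and a local stable ``manifold'' given by the pre-histories that shadow $\bar x$; these two transversal families give rise to a measurable local product structure on Pesin blocks.

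Second, following the construction in \cite{doob.2012} --- adapted here from the conformal to the smooth non-conformal setting --- I would extract inside a Pesin block a \textbf{RRR}, i.e.\ a set $\Sigma \subset Z$ of positive $\bar\mu$-measure that is a product of local unstable and stable pieces and enjoys the \emph{regular return} property: the first-return map $F^{\bar R}: \Sigma \to \Sigma$ maps each ``vertical strip'' of $\Sigma$ onto a full ``horizontal strip'' (so projecting along stable fibers yields a Markov structure). The key geometric ingredient for this step, beyond Pesin theory, is the existence of infinitely many hyperbolic times with controlled backward contraction and distortion on $M$; here the non-degeneracy conditions (\ref{hipotesis.R.2})--(\ref{hipotesis.R.4}) substitute for non-flatness, giving polynomial estimates that recover the effective hyperbolicity needed away from $\mathcal{C}\cup\mathcal{S}$.

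Third, setting $B := \pi(\Sigma)$ and defining $f^R : \bigcup_i B_i \to B$ as the $\pi_\Sigma$-projection of $F^{\bar R}$, one checks that the countable Markov partition $\{B_i\}$ of $B$ inherits the required structure from $\Sigma$. The expansion (\ref{rama.expansora}) follows because return times to $\Sigma$ are hyperbolic times: the accumulated derivative $\|Df^{R_i}(x)v\|$ grows like $\lambda^{R_i}$ along the projected unstable direction, which here is the full tangent space. The bounded distortion estimate (\ref{distorsion.limitada}) is obtained by summing the log-Jacobian increments along the orbit using (\ref{hipotesis.R.3}) and the exponential backward contraction along hyperbolic times, a standard Pliss/Mañé-style argument. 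Finally, Kac's lemma gives $\int_\Sigma \bar R \, d\bar\mu = \bar\mu\bigl(\bigcup_n F^{-n}\Sigma\bigr) = 1$, so the induced measure $\mu^* := \bar\mu|_\Sigma/\bar\mu(\Sigma)$ pushed down to $B$ satisfies $\int R\, d\mu^* < +\infty$; then bounded distortion implies that $\pi_*\mu^*$ has a density on $B$ bounded away from $0$ and $\infty$, and standard coinduction gives back $\mu$ up to normalization.

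The main obstacle, in my judgment, is not the combinatorial-Markov part but the passage from $\bar\mu$-integrability of $\bar R$ to $\Vol$-integrability of $R$ on $B$, i.e.\ verifying (\ref{retorno.integrable}). In the uniformly hyperbolic case this is immediate from the comparability of $\mu^*$ with Lebesgue on $B$, but here $B$ may have points whose forward orbits shadow the critical/singular set for a long time, producing long return times on a set that could be small in $\mu^*$ but non-negligible in $\Vol$. Controlling this requires (i) choosing the reference Pesin block so that its projection $B$ stays at definite distance from $\mathcal{C}\cup\mathcal{S}$, (ii) using (\ref{hipotesis.R.2})--(\ref{hipotesis.R.4}) to show that the branch-domains $B_i$ with large $R_i$ have volume decaying at least geometrically, and (iii) exploiting the bounded-distortion estimate together with the fact that $\Vol(B_i)$ and $\mu^*(B_i)$ are comparable on each branch. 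Closing this last loop --- converting the uniform Pesin-type distortion bounds into the exponential tail estimate $\sum_i R_i \Vol(B_i) < +\infty$ --- is where the technical heart of the proof will lie.
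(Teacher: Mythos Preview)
Your outline follows the paper's strategy closely: natural extension, \textbf{RRR} $\Sigma$, first-return map, projection to $B$. You also correctly identify the crux as passing from $\bar\mu$-integrability of $\bar R$ (Kac) to $\Vol$-integrability of $R$ on $B$. However, your proposed resolution via exponential tail estimates on $\Vol(B_i)$ is harder than necessary and misses the paper's actual mechanism. Two points are overlooked. First, $\bar R$ is \emph{not} constant on the fibers $\pi^{-1}(x)\cap\Sigma$: different pre-histories of the same $x$ may first return to $\Sigma$ at different times, so there is no canonical $R$ on $B$ obtained by pushing $\bar R$ forward, and in particular your sentence ``$\pi_*\mu^*$ has bounded density, hence coinduction gives $\mu$'' does not yet produce a well-defined induced map. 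The paper resolves this by restricting to a \emph{single unstable plaque} $\xi(z)=\Sigma_z$: Rokhlin disintegration of $\int_\Sigma\bar R\,d\bar\mu<\infty$ forces $\int_{\xi(z)}\bar R\,d\bar\mu_{\xi(z)}<\infty$ for $\bar\mu$-a.e.\ $z$, and the branches $B_i$ are the projections of those subrectangles $S_j$ (from Lemma~\ref{lema.tecnico.principal.1}, iterated) that meet this one plaque. Second, the comparability $\Vol_{\xi(z)}\le K_0\,\bar\mu_{\xi(z)}$ that converts this into $\int_B R\,d\Vol<\infty$ is not a consequence of forward branch distortion; it comes from the explicit conditional density $d\bar\mu_{\xi(z)}/d\Vol_{\xi(z)}\propto\prod_{i\ge0}JF(F^{-i}z)/JF(F^{-i}\cdot)$ (Lemma~\ref{lema.densidad.condicional.barmu}), available because $\mu$ is an \textbf{acim} (Pesin--Rohlin formula), and bounded precisely by the \emph{backward} distortion estimate (\ref{backward.distorsion.limitada.1}) built into the \textbf{RRR}. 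No tail analysis is needed.

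Two smaller corrections. The paper does not invoke hyperbolic times; expansion and distortion on branches come directly from Pesin's unstable-manifold theorem and the tempered-distortion Lemma~\ref{lema.distorsion.temperada}, uniformized over a Lusin set in the construction of $\Sigma$. And the raw first return gives only $\|Df^{\bar R_j}v\|\ge C^{-1}e^{\bar R_j\chi/3}\|v\|$, which need not exceed $\|v\|$ for small $\bar R_j$; one must pass to an iterate $\tau^N$ with $C^{-1}e^{N\chi/3}>1$ before projecting.
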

\ 
\\
Critical and singular cases were discussed separately in \cite{alves.dias.luzzatto}. In particular, the critical case 
\cite[Theorem 2]{alves.dias.luzzatto} does not need $\mu$ to be regular, in contrast with theorem \ref{main.0}. Actually, our induction 
approach depends heavily on the existence and expanding properties of local unstable manifolds. This requires some regularity conditions on 
$\mu$ aimed at taking advantage of standard smooth ergodic theory tools, as Pesin's $\epsilon$-reduction theorem, Lyapunov charts and so on. 
This allows for a unified treatment of the critical and singular cases. On the other hand, we do not require $\log\|Df(x)\|$ to be locally 
Lipschitz.

\subsection{Strategy of the proof: the induction scheme}\label{subsec:estrategia}

Let us start by recalling the natural extension of a non-invertible map. The phase space is the inverse 
limit of $f:M\to M$: 
$$
Z = \{(z_n)_{n \geq 0}: f(z_{n+1}) = z_n\},
$$
i.e., points $z \in Z$ are simply \emph{pre-orbits} of $f$. $Z$ is a compact metric space with distance $d(z,z') = \sum_{n=0}^{+\infty}d_M(z_n,z_n')/2^n$, 
where $d_M$ is the natural metric of $M$ as the Riemannn space. $F : Z \to Z$ defined as
$$
F(z) = (f(z_0), z_0, \cdots)
$$
is an invertible transformation, with inverse $F^{-1}(z) = (z_1, z_2, \cdots )$ which extends to $f$, namely $f \circ \pi = \pi \circ F$, 
where $\pi : Z \to M$ is the projection on the first coordinate. If $f$ is continuous, then $F$ is a homeomorphism of $Z$ itself. 

Given a probability of Borel $\mu$ in $M$ there is a unique $\bar{\mu}$ in $Z$ such that $\bar\mu(\pi^{-1}_n(B)) = \mu(B)$, for all Borel 
subset $B \subset M$ and for every $n \geq 0$, where $\pi_n : Z \to M$ Is the projection on the $n$-th coordinate: 
$\pi_n(z_0,z_1,z_2,\cdots) = z_n$. $\bar{\mu}$ is $F$-invariant (resp. ergodic) if $\mu$ is $f$-invariant (resp. ergodic). 

\begin{definition}
$F:(Z,{\mathcal B}, \bar{\mu}) \to (Z,{\mathcal B}, \bar{\mu})$ is the \emph{natural extension} of $(f,\mu)$. 
\end{definition}

$\pi : Z \to M$ is a Cantor fiber bundle, that is, every fiber $\mathcal{F}(z) = \pi^{-1}(\pi(z))$ is a Cantor set. If $f$ is uniformly 
expanding these fibers vary continuously and coherently so that the fiber bundle is locally trivial: for every $x \in M$ there exists an 
open neighborhood $U_x$ such that $\pi^{-1}(U_x)$ is homeomorphic to the cartesian product $U \times \mathcal{F}$, where $\mathcal{F}$ is a typical 'vertical' 
fiber of $Z$. This gives to $Z$ a structure of continuous lamination. Every local plaque $U_z \subset Z$ passing by $z$ is a local unstable 
manifold, that is, the backward orbit under $F$ of every $w \in U_z$ is exponentially asymptotic to the backward orbit of $z$.

When $f$ is non-uniformly expanding we get instead a measurable lamination of local unstable manifolds, that is, $\pi^{-1}(U_x)$ is no longer 
homeomorphic but measurable isomorphic to $U \times K$, that is, there exist an invertible measurable map 
$\varphi: \pi^{-1}(U_x) \to U \times \mathcal{F}$ with measurable inverse. Nevertheless, $Z$ is covered by 'flow-boxes'  
$\Sigma$ which are \emph{homeomorphic} to cartesian products $B \times K$ of an open set $B \subset M$ times a Cantor subset 
$K \subset \mathcal{F}$. Furthermore, $\pi|\Sigma_z: \Sigma_z \to B$ is a diffeomorphism for every $z \in \Sigma$, being 
$\Sigma_z \subset \Sigma$ the plaque of 'height' $z$ in $\Sigma$. We will call \emph{rectangles} to these 'flow-boxes' and say that a map is 
\emph{laminated} if it preserves this structure.

As we mention before, our induced Markov maps are suitable projections of the first return map $F^{\overline{R}}: \Sigma \to \Sigma$ to a 
\emph{rectangle with regular returns} $\Sigma$. Additionally to a uniform control on the rate of contraction 
along inverse branches and good control on the forward and backward distortion of horizontal plaques of $\Sigma$, \textbf{RRR} has the 
crucial property that, for every $n,m\geq 0$ and for every $z,z'\in \Sigma$, the preimages $F^{-n}\Sigma_z$, $F^{-m}\Sigma_{z'}$ and its 
projections $\pi{F^{-n}\Sigma_z}$, $\pi{F^{-m}\Sigma_{z'}}$ are \emph{disjoint or nested}, similarly to the 'nice sets' appearing in the study of 
rational maps of the Riemann sphere, from which Yoccoz's puzzles are prominent and well known examples. See \cite{doob.2012}, 
\cite{przyticky.rivera-letelier.2007} and \cite{przyticky.rivera-letelier.2011}.

We will prove in subsection \ref{subsec:rectangulo.retornos.regulares} the existence of \textbf{RRR} with positive measure 
for non-uniformly expanding transformations of compact Riemannian manifolds with non-degenerate criticalities and/or singularities.

The 'disjoint or nested' property of \textbf{RRR} gives an elegant solution to the overlapping of expanding branches which appear 
naturally in the induction process, allowing for a great simplification of the construction of a GMY structure.

Our crucial observation is that the first return map to a \textbf{RRR} is a piecewise hyperbolic map. That is, there exists a decomposition 
into countably many rectangles $S_j$ which covers $\Sigma$ up-to a $\bar\mu$-zero set and laminated diffeomorphisms 
$F^{\overline{R}_j}: S_j \to U_j$ which maps hyperbolically $S_j$ onto 'horizontal' strips $U_j$ crossing $\Sigma$ such that 
$F^{\overline{R}}|S_j = F^{\overline{R}_j}$. Here hyperbolic means, as usual, that it expands (resp. contracts) uniformly the 'horizontal' 
(resp. 'vertical') plaques (resp. Cantor fibers) preserving the laminated structure of $\Sigma$. This is our main technical lemma 
\ref{lema.tecnico.principal.1}. 

This structure is replicated by iterations of the first return map. So we take a suitable iterate such that the corresponding hyperbolic 
branches $F^{\overline{R}_j}: S_j \to U_j$ projects onto extendable expanding branches $f^{\overline{R}_j}: B_j \to B$ with good distortion 
properties. One get thus an induced Markov map $f^R: \bigcup_iB_i \to B$ with integrable return times $R$ choosing the $S_j$'s which 
intersects a suitable unstable plaque $\Sigma_z$ and then projecting the corresponding branches onto $M$.

The \emph{'disjoint or nested'} property of \textbf{RRR} has several advantages. In first place, it gives an elegant solution to the 
overlapping of expanding branches domains appearing in this type of constructions. On the other hand, it provides a 
partial ordering to the family of expanding branches, proving therefore that there are maximal induced maps. These are precisely the induced 
Markov maps appearing in \cite{doob.2012}. However, in contrast with Doob's approach, \emph{our induced maps are not necessarily maximal}. Moreover, 
even though the maximal induced Markov maps associated to non-uniformly expanding rational maps of the Riemann sphere are natural and naturally 
extendible, this might not be the case in our setting, due to the non conformal character of $f$. See section \ref{sec:demostracion.main.0}.

Finally, as we shall see in section \ref{sec:herraduras}, the first return map $F^{\bar{R}}: \Sigma \itself$ to a \textbf{RRR} is a 
\emph{``horseshoe with infinitely many branches''} but, even though 'vertical rectangles' are mapped 
hyperbolically onto 'horizontal rectangles' they do not return at the same time. This correct a gap in the proof and statement of 
\cite[Lemma 4.1]{sanchez.salas.2003}.

\subsection{Overview}
In \emph{section \ref{sec:preliminares}} we collect some technical preliminary results which will be used in the proof of theorem \ref{main.0}. 
In \emph{subsection \ref{subsec:funciones.temperadas}} we prove some technical lemmas on slowly varying $\epsilon$-tempered functions. In 
\emph{subsection \ref{subsec:approach.critical.set}} we use non-degeneracy conditions and regularity of $\mu$ to show that preorbits approaches 
the critical/singular set with subexponential speed. This is important in the local unstable manifolds existence theorem. 
\emph{Subsection \ref{subsec:tma.variedad.inestable}} is dedicated to state and outline the proof of the local unstable manifold theorem for 
the non-uniformly expanding maps in consideration. \emph{Subsection \ref{subsec:distorsion.temperada}} establishes a tempered estimate for 
distortion of forward and backward iterates of local unstable manifolds, using the non-degeneracy conditions. In 
\emph{subsection \ref{subsec:rectangulo.retornos.regulares}} we prove the existence of rectangles with regular returns. \emph{In subsection 
\ref{subsec:densidades.condicionales}} we construct an increasing generating measurable partition $\xi$ of $Z$ called Pesin partition and 
recall the densities of conditional measures of $\bar\mu$ wrt $\xi$. Then we use Rochlin's decomposition theorem to prove that the push-forward 
$\pi^*\bar\mu_{\Sigma}$ of the restriction of $\bar\mu$ to $\Sigma$ is equivalent to the Riemannian volume. This is a crucial step in 
the proof of theorem \ref{main.0}. In \emph{section \ref{sec:demostracion.main.0}} we prove main technical lemma \ref{lema.tecnico.principal.1} 
and, consequently, theorem \ref{main.0}. Then we prove that return times of maximal induced maps is integrable. Finally, in
\emph{section \ref{sec:herraduras}} we discuss the structure of 'horseshoe with infinitely many branches' of the first return map to $\Sigma$.

\tableofcontents

\section{ Preliminary technical results}\label{sec:preliminares}
 
 \subsection{Tempered functions}\label{subsec:funciones.temperadas}
\begin{definition}
We say that a Borel function $\phi> 0$ is $\epsilon $-tempered for $F$ if
\begin{equation}\label{temperada}
 e^{-\epsilon} \leq \dfrac{\phi(F(z))}{\phi(z)} \leq e^{\epsilon}
\end{equation}
\end{definition}

\begin{lemma}\label{lema.kernel.temperado} 
Let $\phi: Z \to (0, + \infty) $ be a Borel function such that
$$
\liminf_{n \to \pm\infty}\dfrac{\log\phi(F^n(z))}{n} = 0 \quad(\ resp. \limsup \ ) \ \bar\mu-c.t.p. \ z \in Z_0.
$$
Then, for every $\epsilon> 0$ small, there are $\epsilon$-tempered Borel functions $\underline\phi_{\epsilon} > 0$
(resp.$\bar\phi_{\epsilon} > 0$) such that
\begin{equation}\label{kernel.temperado}
\underline\phi_{\epsilon} \leq \phi \quad(\ resp. \phi \leq \bar\phi_{\epsilon} \ ) \ \bar\mu-c.t.p. \ z \in Z_0.
\end{equation}
\end{lemma}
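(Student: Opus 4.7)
The plan is to build $\underline\phi_\epsilon$ (resp.\ $\bar\phi_\epsilon$) as a kernel-weighted envelope of $\phi$ along the $F$-orbit of $z$:
\begin{equation*}
\underline\phi_\epsilon(z) := \inf_{n\in\mathbb{Z}}\phi(F^n(z))\,e^{\epsilon|n|},\qquad \bar\phi_\epsilon(z) := \sup_{n\in\mathbb{Z}}\phi(F^n(z))\,e^{-\epsilon|n|}.
\end{equation*}
Both are Borel, being countable infima/suprema of Borel functions, and the inequalities $\underline\phi_\epsilon(z)\leq\phi(z)\leq\bar\phi_\epsilon(z)$ are immediate from the $n=0$ term.

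Next I would check that each is $\epsilon$-tempered. Substituting $m=n+1$ yields $\underline\phi_\epsilon(F(z))=\inf_{m\in\mathbb{Z}}\phi(F^m(z))e^{\epsilon|m-1|}$, and the elementary bound $|m|-1\leq|m-1|\leq|m|+1$ gives
\begin{equation*}
e^{-\epsilon}\,\underline\phi_\epsilon(z)\;\leq\;\underline\phi_\epsilon(F(z))\;\leq\;e^{\epsilon}\,\underline\phi_\epsilon(z),
\end{equation*}
and the identical shift-and-estimate argument works for $\bar\phi_\epsilon$. Measurability and the tempering property therefore come essentially for free from the construction; the content of the lemma sits in the last step.

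The main step is to verify that $\underline\phi_\epsilon(z)>0$ (resp.\ $\bar\phi_\epsilon(z)<+\infty$) $\bar\mu$-a.e.\ on $Z_0$. Fix $0<\delta<\epsilon$. The $\liminf$ hypothesis produces, for $\bar\mu$-a.e.\ $z\in Z_0$, a measurable threshold $N(z)<+\infty$ such that $\phi(F^n(z))\geq e^{-\delta|n|}$ for every $|n|\geq N(z)$; consequently
\begin{equation*}
\phi(F^n(z))\,e^{\epsilon|n|}\;\geq\;e^{(\epsilon-\delta)|n|}\;\longrightarrow\;+\infty\qquad\text{as }|n|\to\infty,
\end{equation*}
so the infimum defining $\underline\phi_\epsilon(z)$ is attained on the finite set $\{|n|<N(z)\}$ and is therefore a strictly positive real number. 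The symmetric argument, replacing $\liminf$ by $\limsup$ and producing $\phi(F^n(z))\leq e^{\delta|n|}$ off a finite window, shows that $\bar\phi_\epsilon(z)$ is finite.

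The only delicate point I anticipate is the translation of the two-sided hypothesis $\liminf_{n\to\pm\infty}\log\phi(F^n(z))/n=0$ into the uniform envelope $\phi(F^n(z))\geq e^{-\delta|n|}$ valid for both $n\to+\infty$ and $n\to-\infty$: the case $n<0$ requires dividing by a negative quantity, which reverses the inequality and is exactly what forces the symmetric form $n\to\pm\infty$ in the statement. Once this sign-conscious bookkeeping is spelled out, the remainder of the argument is purely formal.
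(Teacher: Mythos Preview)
Your proposal is correct and follows essentially the same approach as the paper: both define $\underline\phi_\epsilon(z)=\inf_{n\in\mathbb{Z}}\phi(F^n(z))e^{\epsilon|n|}$ and $\bar\phi_\epsilon(z)=\sup_{n\in\mathbb{Z}}\phi(F^n(z))e^{-\epsilon|n|}$, verify temperedness via the shift $m=n+1$ together with $|m|-1\le|m-1|\le|m|+1$, and deduce positivity/finiteness from the subexponential hypothesis. Your treatment of the positivity step (introducing an auxiliary $\delta<\epsilon$ and noting the infimum is attained on a finite window) is in fact slightly more careful than the paper's.
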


\begin{proof}
For every $ \epsilon> 0$ small and $ \bar \mu $-a.e. $ Z \in Z $ there exists $N = N (\epsilon, z)$ such that
$$
e^{-\epsilon|n|} \leq \inf_{|k| \geq n}\phi(F^n(z)) \leq e^{\epsilon|n|} \quad\forall \ n \geq N. 
$$
Therefore, we define
$$
\underline\phi_{\epsilon}(z) = \inf_{n \in \enteros}\phi(F^n(z))e^{|n|\epsilon}
$$(\ref{expansion.uniforme.1}), 
(\ref{forward.distorsion.limitada.1}) and (\ref{backward.distorsion.limitada.1})
This is a well defined and a positive Borel function. Then,
\begin{eqnarray*}
 \underline\phi_{\epsilon}(F(z)) & = & \inf_{n \in \enteros}\phi(F^{n+1}(z))e^{|n|\epsilon}\\
                                  & = & \inf_{m \in \enteros}\phi(F^{m}(z))e^{|m-1|\epsilon}.
\end{eqnarray*}
as $|m|-1 \leq |m-1| \leq |m|+1$, $\forall \ m \in \enteros$, clearly we have
$$
e^{-\epsilon}\underline\phi_{\epsilon}(z) \leq \underline\phi_{\epsilon}(F(z)) \leq e^{\epsilon}\underline\phi_{\epsilon}(z)
$$
Similarly, if
$$
\limsup_{n \to \pm\infty}\dfrac{\log\phi(F^n(z))}{n} = 0, \quad\bar\mu-c.t.p. \ z \in Z_0
$$
we have that for almost every $ z \in Z_0 $ there is $ N = N (\epsilon, z) $ such that
$$
e^{-\epsilon|n|} \leq \sup_{|k| \geq n}\phi(F^n(z)) \leq e^{\epsilon|n|} \quad\forall \ n \geq N. 
$$
Then we define
$$
\bar\phi_{\epsilon}(z) = \sup_{n \in \enteros}\phi(F^n(z))e^{-|n|\epsilon}
$$
and we prove in a similar manner that $\bar\phi$ is an $ \epsilon $-tempered positive Borel function.
\end{proof}

As an immediate corollary we get that, if 
$$
\lim_{n \to \pm\infty}\dfrac{\log\phi(F^n(z))}{n} = 0, \quad\bar\mu-c.t.p. \ z \in Z_0,
$$
then there are $\epsilon$-tempered positive Borel functions $\underline{\phi}_{\epsilon}$  and $\bar{\phi}_{\epsilon}$ such that
$$
\underline{\phi}_{\epsilon} \leq \phi \leq \bar\phi_{\epsilon}.
$$
\subsection{Approach to the critical set}\label{subsec:approach.critical.set}
From now on, we denote $\mathcal{C}$ the critical and/or singular point set satisfying hypothesis 
(\ref{hipotesis.R.2}) and (\ref{hipotesis.R.4}).

The next lemma proves that, generally, the pre-orbits approach with subexponential speed to the critical/singular set $\mathcal{C}$.
\begin{lemma}\label{lema.acercamiento.conjunto.critico}
Let
$$
\delta_{\mathcal{C}}(z) := d(\pi(z),{\mathcal{C}}).
$$
then,
\begin{equation}\label{acercamiento.conjunto.critico.0}
\lim_{n \to \pm\infty}\dfrac{\log\delta_{\mathcal{C}}(F^n(z))}{|n|} = 0, \quad\bar\mu-c.t.p. \ z \in Z_0. 
\end{equation}
\end{lemma}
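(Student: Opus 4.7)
The plan is to apply Birkhoff's ergodic theorem to $\phi = \log \delta_{\mathcal{C}} \circ \pi$ with respect to $F$ (and then with respect to $F^{-1}$), after first showing that this function is $\bar\mu$-integrable. The argument then closes by a routine telescoping trick: if $S_n\phi/n$ converges a.e., then so does $\phi \circ F^{n-1}/n$ to zero, since
$$
\frac{\phi(F^{n-1}z)}{n} \; = \; \frac{S_n\phi(z)}{n} \;-\; \frac{n-1}{n}\cdot\frac{S_{n-1}\phi(z)}{n-1},
$$
and both terms on the right converge to the same $F$-invariant limit.

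The main task is therefore the integrability $\int |\log \delta_{\mathcal{C}}|\,d\bar\mu < +\infty$. The positive part $(\log \delta_{\mathcal{C}})^+$ is bounded above since $M$ is compact, so only the negative part is at issue. Here I would exploit the non-degeneracy hypotheses. On the neighborhood $V$ of the critical set $\mathcal{C}$, condition (\ref{hipotesis.R.2}) yields a constant $K$ with $\|Df(x)^{-1}\|^{-1} \leq K\, d(x,\mathcal{C})^\beta$, hence
$$
-\log d(x,\mathcal{C}) \;\leq\; \tfrac{1}{\beta}\bigl(\log \|Df(x)^{-1}\| + \log K\bigr).
$$
Similarly, on the neighborhood $W$ of the singular set $\mathcal{S}$, condition (\ref{hipotesis.R.4}) gives
$$
-\log d(x,\mathcal{S}) \;\leq\; \tfrac{1}{\beta}\bigl(\log \|Df(x)\| + \log K\bigr).
$$
Outside $V\cup W$ the distance to $\mathcal{C}\cup\mathcal{S}$ is bounded below, so $|\log \delta_{\mathcal{C}}|$ is bounded there. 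Combining these estimates,
$$
|\log \delta_{\mathcal{C}}(z)| \;\leq\; \tfrac{1}{\beta}\max\bigl\{\log\|Df(\pi z)\|,\log\|Df(\pi z)^{-1}\|\bigr\} + C,
$$
which is $\bar\mu$-integrable precisely by the regularity assumption (\ref{hipotesis.R.1}) on $\mu$ (and the fact that $\pi_*\bar\mu=\mu$).

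With integrability in hand, Birkhoff's theorem applied to $F$ gives a.e. convergence of $\frac{1}{n}S_n\phi$ as $n\to+\infty$, and the telescoping identity above then yields $\phi(F^n z)/n \to 0$ for $\bar\mu$-a.e.\ $z$. The same argument with $F^{-1}$ in place of $F$ (still an $\bar\mu$-preserving invertible transformation on the compact space $Z$) delivers $\phi(F^{-n} z)/n \to 0$. Together these give (\ref{acercamiento.conjunto.critico.0}). The only step that requires genuine attention is the integrability bound, and that bound is essentially the whole purpose of combining the non-degeneracy hypotheses with the regularity of $\mu$; once it is established, the rest is a standard application of the ergodic theorem.
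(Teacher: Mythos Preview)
Your proposal is correct and follows exactly the approach the paper takes: the paper's proof is a one-line remark that the conclusion ``follows from Birkhoff's theorem since $\delta_{\mathcal{C}}$ is $\bar\mu$-integrable by the regularity of $\mu$ and hypothesis (\ref{hipotesis.R.2})/(\ref{hipotesis.R.4}),'' and you have simply spelled out how the non-degeneracy bounds convert $|\log\delta_{\mathcal{C}}|$ into the integrable function in (\ref{hipotesis.R.1}) and how the telescoping step extracts $\phi(F^n z)/n\to 0$ from the ergodic averages. One small point worth tightening: your displayed inequality in the singular case, $-\log d(x,\mathcal{S}) \leq \tfrac{1}{\beta}(\log\|Df(x)\|+\log K)$, actually needs a \emph{lower} bound $\|Df(x)\|\geq c\,d(x,\mathcal{S})^{-\beta}$ rather than the upper bound literally written in (\ref{hipotesis.R.4}); this two-sided control is the standard meaning of ``non-degenerate'' and is what both you and the paper are implicitly using.
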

\begin{proof}
Follow from Birkhoff's theorem since $\delta_{\mathcal{C}} = \delta_{\mathcal{C}}(z)$ is $\bar\mu$-integrable by the regularity of $\mu$ and 
hypothesis (\ref{hipotesis.R.2})/(\ref{hipotesis.R.4}).
\end{proof}

\subsection{Local unstable manifold theorem}\label{subsec:tma.variedad.inestable}

\begin{theorem}\label{variedad.inestable.local}
Let $f$ be a $C^2$ transformation with non degenerate critical/singu\-lar set $\mathcal{C}$ leaving invariant a regular ergodic expanding 
Borel probability $\mu$. Then for every $\epsilon > 0$ there exists $\epsilon$-tempered Borel functions 
$\alpha, \beta, \gamma:Z \to (0,+\infty)$ such that $\bar\mu$-a.e.:
\begin{enumerate}
\item the local unstable set
\begin{equation}\label{backward.asintotics}
 {W}_{\alpha}^{u}(z)=\{w\in Z: d(z_n,w_n)\leq \beta(z)e^{-n\chi/2}d(z_0,w_0) \quad \forall n\geq 0\}
 \end{equation}
 is a regular submanifold embedded in $Z$;
 \item $\pi | {W}^u_{\alpha}(z) : {W}^u_{\alpha}(z) \to B(\pi(z),\alpha(z)) \subset M$ is a diffeomorphism;
 \item $B(\pi(z),\alpha(z)) \subset B(\pi(z), \delta_{\mathcal{C}}(z)/2)$; in particular $B(\pi(z),\alpha(z))$ does not intersect the 
 critical/singular set $\mathcal{C}$;
 \item the family of local unstable manifolds is invariant, in that, $W_{\alpha}^{u}(F(z)) \subset F(W^u_{\alpha}(z))$ and 
 $f^n : \pi(F^{-n}W^u_{\alpha}(z)) \to W^u_{\alpha}(z)$ is an expanding diffeomorphism, modulated by $\gamma(z)$, i.e.
  \begin{equation}\label{expansion.uniforme}
\inf_{w \in F^{-n}[W^u_{\alpha}(z)]}\|(f^n)'(\pi(w))v\| \geq \gamma(z)e^{n\chi/2}\|v\|
\end{equation}
for every $v \in T_{\pi(w)}M$ and for every $n \geq 0$.
\end{enumerate}
\end{theorem}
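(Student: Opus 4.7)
The plan is to adapt the Hadamard--Perron--Pesin construction to this non-invertible setting by building, for $\bar\mu$-a.e.\ pre-orbit $z=(z_0,z_1,\dots)\in Z$, a compatible sequence of inverse branches of $f$ that are well-defined on a ball of $\epsilon$-tempered radius $\alpha(z)$ around $z_0$ and contract at rate $e^{-n\chi/2}$. The local unstable leaf is then the image in $Z$ of this ball under the infinite composition of inverse branches.

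The first ingredient is to package all of the non-uniform infinitesimal data as $\epsilon$-tempered functions. Oseledets' theorem applied to $(f,\mu)$ and lifted to $(F,\bar\mu)$ furnishes $\chi>0$ and a positive measurable $C(z)$ with $\|Df^n(z_n)^{-1}\|\leq C(z)e^{-n\chi}$ for $\bar\mu$-a.e.\ $z$ and every $n\geq 0$, where $z_n:=\pi(F^{-n}z)$. Hypotheses (\ref{hipotesis.R.2})/(\ref{hipotesis.R.4}) give $\log\|Df(\pi(w))^{\pm 1}\|$ polynomially dominated by $-\log\delta_\mathcal{C}(w)$, so Lemma \ref{lema.acercamiento.conjunto.critico} shows that $\log\|Df(z_n)^{\pm 1}\|$, $\log\delta_\mathcal{C}(F^n z)$, and $\log C(F^n z)$ all grow at most subexponentially along $\bar\mu$-a.e.\ $F$-orbit. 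Lemma \ref{lema.kernel.temperado} then produces an $\epsilon$-tempered $\gamma_0(z)>0$ satisfying $\|Df^n(z_n)^{-1}\|\leq \gamma_0(z)^{-1}e^{-n(\chi-\epsilon)}$, together with $\epsilon$-tempered upper bounds on $\|Df(z_n)\|$ and on the distortion coefficient of (\ref{hipotesis.R.3}), and an $\epsilon$-tempered lower bound $D_0(z)\leq\delta_\mathcal{C}(z)$.

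With this data in hand, set $\alpha(z):=\min\{D_0(z)/2,\,\eta\gamma_0(z)\}$ for a suitable small absolute constant $\eta>0$; this is $\epsilon$-tempered and immediately yields item (3). Because $z_1\notin\mathcal{C}$ (Lemma \ref{lema.acercamiento.conjunto.critico} applied to $F^{-1}z$) and $\|Df(z_1)^{-1}\|$ is polynomially bounded by $D_0(z)^{-\beta}$, the local inverse branch $g_1$ of $f$ with $g_1(z_0)=z_1$ is defined on $B(z_0,\alpha(z))$. Inductively, the composition $\phi_n:=g_n\circ\cdots\circ g_1$ has Lipschitz constant at most $\|Df^n(z_n)^{-1}\|\leq \gamma_0(z)^{-1}e^{-n(\chi-\epsilon)}$, so it sends $B(z_0,\alpha(z))$ into a ball around $z_n$ of radius at most $\eta\,e^{-n(\chi-\epsilon)}$, which -- after choosing $\eta$ small in a tempered way and absorbing the loss into $\gamma_0$ -- stays below the tempered safety margin $D_0(F^{-n}z)/2\geq (D_0(z)/2)e^{-n\epsilon}$; hence $g_{n+1}$ is defined on the image and the induction closes. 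Setting
\[
W^u_\alpha(z):=\{(\phi_n(x))_{n\geq 0}\in Z:\ x\in B(z_0,\alpha(z))\},
\]
items (1) and (2) follow because each $\phi_n$ is a $C^2$ diffeomorphism onto its image and $\pi|W^u_\alpha(z)$ is the map $(\phi_n(x))_{n\geq 0}\mapsto x$. For item (4), the inverse-branch sequence for $Fz$ is obtained by dropping $g_1$ from that of $z$, and the expansion estimate (\ref{expansion.uniforme}) with $\gamma(z):=\gamma_0(z)$ is dual to the contraction bound above, upon replacing $\chi-\epsilon$ by $\chi/2$ (possible provided $\epsilon<\chi/2$).

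The main obstacle is precisely this inductive control: one must ensure that the infinite composition of inverse branches never leaves the region where the non-degeneracy bounds apply, while simultaneously preserving a definite exponential contraction rate. The balance is delicate -- inverse derivative norms may blow up like $\delta_\mathcal{C}^{-\beta}$, yet pre-orbits approach $\mathcal{C}$ only subexponentially by Lemma \ref{lema.acercamiento.conjunto.critico} -- and packaging all of these competing growth rates into a single tempered radius $\alpha(z)$ that works uniformly for every $n\geq 0$ is the technical heart of the argument.
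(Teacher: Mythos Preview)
Your approach differs from the paper's: the paper builds Lyapunov charts $\psi_z$ via Pesin's $\epsilon$-reduction theorem, so that the local representative $\bar f_z=\psi_{F(z)}^{-1}\circ f\circ\psi_z$ is a $C^1$-small perturbation of a uniformly expanding linear map, and then the two-sided estimates (\ref{expansion.local.uniforme}) and (\ref{contraccion.uniforme.ramas.inversas}) hold \emph{uniformly for all pairs of points} in a ball of tempered radius. You instead work directly with inverse branches in the original Riemannian coordinates and invoke Oseledets pointwise.

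There is a genuine gap at the sentence ``the composition $\phi_n := g_n\circ\cdots\circ g_1$ has Lipschitz constant at most $\|Df^n(z_n)^{-1}\|$''. The right-hand side is only the norm of $D\phi_n$ at the single point $z_0$; the Lipschitz constant of $\phi_n$ on $B(z_0,\alpha(z))$ is $\sup_{x\in B(z_0,\alpha(z))}\|D\phi_n(x)\|$, and Oseledets tells you nothing about this supremum. This uniform-on-balls control is exactly what Lyapunov charts supply and your argument lacks. Without it the inductive step does not close: you cannot conclude that $\phi_n$ maps the ball into a set small enough to lie in the domain of $g_{n+1}$, because $\sup\|D\phi_n\|$ over the ball may be far larger than its value at the center. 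To repair your direct approach you would need, at each step, to bound $\|Df(y)^{-1}\|$ for \emph{every} $y$ in the image ball around $z_{n+1}$ (using (\ref{hipotesis.R.2}) and the $C^2$ regularity of $f$), and then show that the discrepancy between the product $\prod_j\sup_y\|Dg_j(y)\|$ and the Oseledets product $\prod_j\|Df(z_j)^{-1}\|$ is controlled by a tempered factor. That is doable, but it amounts to reconstructing by hand the Lyapunov-metric machinery that the paper invokes directly.
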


This result is a well known result. See for example \cite[Proposition 14]{doob.2012}, \cite[Theorem 8]{ledrappier.1981}, 
\cite[Theorem 6.1]{ruelle.1979}, \- \cite{ruelle-shub.1980}, \cite[Theorem 1]{zhu.1998}. For the sake of completeness we outline a proof. 
Details will appear elsewhere.

\begin{proof}[Outline of a proof]

We shall prove that there are Borel functions $\alpha$ and $\beta$ with subexponential growth along the orbits of $F$ and $C^2$ 
diffeomorphisms
$$
\Phi_z : B(\pi(z),\alpha(z)) \to W^u_{\alpha}(z).
$$
Here $\Phi_z(x) = (f^{-n}_z(x))_{n \geq 0}$ where $f^{-n}_z : B(\pi(z),\alpha(z)) \to M$ is the (unique) local branch of 
$f^{-n}$ such that $f^{-n}_z(z_0) = z_{n}$, for every $n \geq 0$.

For this we first choose \emph{Lyapunov's charts} $\psi_z : B(0,\rho(z)) \to M$ for $\bar\mu$-a.e. $z \in Z$ such that, if 
$\bar{f}_z = \psi^{-1}_{F(z)} \circ f \circ \psi_z$ is the local representative of $f$ then
\begin{equation}\label{expansion.local.uniforme}
e^{\chi/2}\|u-v\| \leq \|\bar{f}_z(u)-\bar{f}_z(v)\| \leq e^{3/2\chi^+}\|u-v\|,
\end{equation}
for every $u,v \in B(0,\rho(z))$, where $\chi > 0$ (resp. $\chi^+$) are the least (resp. largest) Lyapunov exponent of $\mu$ and $\| \cdot \|$ 
is the norm defined by the standard inner product of $\real^m$.

Then $\bar{f}^{-n}_z = \psi^{-1}_{F^{-n}(z)} \circ f^{-n}_z \circ \psi_z$, the local representative of $f^{-n}_z$, is a contraction on a 
suitable domain, that is, 
\begin{equation}\label{contraccion.uniforme.ramas.inversas}
 e^{-3/2n\chi^+} \leq \|\bar{f}^{-n}_z(u) - \bar{f}^{-n}_z(v)\| \leq e^{-n\chi/2}\|u-v\|,
\end{equation}
for every $u,v \in B(0,\underline{\rho}_{\epsilon}(z))$, where $\underline{\rho}_{\epsilon} \leq \rho$ is a positive $\epsilon$-tempered Borel 
function provided by lemma \ref{lema.kernel.temperado}. 

The distortion between the standard metric of $\real^m$ and the Riemannian metric in $M$ due to the charts $\psi_z$ can be estimated as follows:
\begin{equation}\label{distorsion.lyapunov.charts}
Cd(\psi_z(x),\psi_z(y)) \leq \|x-y\| \leq D(z)d(\psi_z(x),\psi_z(y)), 
\end{equation}
for every $x,y \in B(0,\rho(z))$, where $C > 0$ is a suitable constant and $D(z)$ a positive Borel function with subexponential
growth along the orbits of $F$.

We use (\ref{distorsion.lyapunov.charts}) and lemma \ref{lema.kernel.temperado} to define 
\begin{equation}\label{definicion.alpha.0}
\alpha(z) := \min\left\{\dfrac{\underline{\rho}_{\epsilon/2}(z)}{\overline{D}_{\epsilon/2}(z)}, \underline{\delta}_{\epsilon}(z)\right\}
\end{equation}
where $\underline{\rho}_{\epsilon/2} \leq \rho$ and $\overline{D}_{\epsilon/2} \geq D$ are $\epsilon/2$-tempered functions and 
$\underline{\delta}_{\epsilon} \leq \delta_{\mathcal{C}}/2$ is an $\epsilon$-tempered function. Then, 
$B(\pi(z),\alpha(z)) \subset \psi_z(B(0,\underline{\rho}_{\epsilon}(z))$ and condition \emph{(3)} in the theorem \ref{variedad.inestable.local} holds true. 
To get estimative (\ref{backward.asintotics}) we use (\ref{contraccion.uniforme.ramas.inversas}) and (\ref{distorsion.lyapunov.charts}) and 
define
\begin{equation}\label{definicion.beta.0}
\beta(z) := \dfrac{\overline{D}_{\epsilon}(z)}{C} 
\end{equation}
where $\bar{D}_{\epsilon} \geq D$ is an $\epsilon$-tempered positive Borel function given by lemma \ref{lema.kernel.temperado}.

The construction of Lyapunov's charts is standard. See \cite{barreira.pesin} and \cite{katok.mendoza}, for instance. The first step is to 
define an invertible linear cocycle $\mathcal{L}$ which covers the natural extension $F : Z_0 \to Z_0$. $\mathcal{L}$ is the natural lift of the 
linear cocycle defined by the derivative $Df$. $Z_0$ is a totally $F$-invariant set of total measure, formed by the preorbites $z \in Z$, 
which do not intersect the critical/singular set $\mathcal{C}$.

By \emph{Pesin's $\epsilon$-reduction theorem} there exists a tempered linear change of coordinates, 
$C_{\epsilon}(z) : \real^m \to \real^m$ which transforms $\mathcal{L}$ into a uniformly expanding linear cycle $A_{\epsilon}(z)$ in $\real^m$ 
such that:
$$
e^{\chi-\epsilon}\|v\| \leq \|A_{\epsilon}(z)v\| \leq e^{\chi^+ + \epsilon}, \quad\bar\mu-a.e. \ z \in Z_0.
$$
To do this we define a \emph{Lyapunov metric} $|\cdot|^*_z$ by averaging the iterates of the linear cocycle $\mathcal{L}$ weighted with a 
suitable kernel. Then we get a linear isometry $C_{\epsilon}: \real^m \to T_{\pi(z)}M$ between $(\real^m, \| \cdot \|)$ and 
$(T_{\pi(z)}M, | \cdot |^*_z)$. See \cite[Theorem S.2.10]{katok.mendoza}.

Lyapunov's charts are the composition $\psi_z := \exp_{\pi(z)} \circ \, C_{\epsilon}(z)$ of the linear coordinate change $C_{\epsilon}(z)$ 
and a geodesic chart in $M$ defined at a (uniform) open ball in the tangent space $T_{\pi(z)}M$. Given $\epsilon > 0$ we choose 
$\rho(z) > 0$ in such way that the representative of $f$ in this local coordinate system is a $C^2$ nonlinear perturbation of the expanding 
linear map $A_{\epsilon}(z) : \real^m \to \real^m$ with $\epsilon$ small $C^1$ norm. This chart satisfies the distortion estimates 
(\ref{distorsion.lyapunov.charts}). See \cite[Theorem S.3.1]{katok.mendoza}. Then, for a well chosen and sufficiently small $\epsilon > 0$ the 
local representative $\bar{f}_z $ (resp. $\bar{f}^{-n}_z$) satisfies (\ref{expansion.local.uniforme}) (resp. \ref{contraccion.uniforme.ramas.inversas}).
\end{proof}

\begin{remark}
By (\ref{contraccion.uniforme.ramas.inversas}), (\ref{distorsion.lyapunov.charts}) and (\ref{definicion.beta.0}) it holds
\begin{equation}\label{expansion.local.f^n}
\dfrac{1}{\beta(z)}e^{n\chi/2}d(x,y) \leq d(f^n(x),f^n(y)) \leq \beta(z)e^{2n\chi^+}d(x,y), 
\end{equation}
$\forall \ x,y \in \pi(F^{-n}(W^u_{\alpha}(z)))$, for $\bar\mu$-a.e. $z \in Z$. Indeed, let $\epsilon > 0$ sufficiently small such that $3/2\chi^+ + \epsilon < 2\chi^+$ 
and $\bar{D}_{\epsilon} \geq D$ be an $\epsilon$-tempered function provided by lemma \ref{lema.kernel.temperado}. Then, for every 
$x,y \in \pi(F^{-n}(W^u_{\alpha}(z)))$
\begin{eqnarray*}
d(f^n(x),f^n(y)) & \leq & \dfrac{\bar{D}_{\epsilon}(F^{-n}(z))}{C}e^{3/2n\chi^+}d(x,y) \\
                     & \leq & \dfrac{\bar{D}_{\epsilon}(F^{-n}(z))}{C}e^{3/2n\chi^+}d(x,y)\\
                     & \leq & \dfrac{\bar{D}_{\epsilon}(z)}{C}e^{n(3/2\chi^++\epsilon)}d(x,y)\\
                     & \leq & \dfrac{\bar{D}_{\epsilon}(z)}{C}e^{2n\chi^+}d(x,y).
\end{eqnarray*}
Likewise the lower bound.
\end{remark}

\subsection{Tempered Distortion}\label{subsec:distorsion.temperada}  
Let us denote $JF(z) = Jf(\pi(z))$ the Jacobian of $F|W^u_{\alpha}(z)$, putting in $W^u_{\alpha}(z)$ the volume element obtained as a push-forward 
of the Riemannian volume element in $M$ by the parameterization $\Phi_z : B(\pi(z),\alpha(z)) \to W^u_{\alpha}(z)$.

\begin{lemma}\label{lema.distorsion.temperada}
There exists $\epsilon_0 > 0$ such that for every $0 < \epsilon < \epsilon_0$ there is an $\epsilon$-tempered Borel function 
$\gamma_d: Z \to (0,+\infty)$, such that
\begin{equation}\label{backward.distorsion.temperada}
\prod_{i=0}^{+\infty}\frac{JF(F^{-i}w)}{JF(F^{-i}w')} \leq \exp\{\gamma_d(z)d(w,w')\},
\end{equation}
for every $w,w'\in W^u_{\alpha}(z)$  and $n \geq 0$,
\begin{equation}\label{forward.distorsion.temperada}
\left\lvert \frac{JF^n(w)}{JF^n(w')}\right\rvert \leq \exp\{\gamma_d(z)d(f^n(\pi(w)),f^n(\pi(w')))\} 
\end{equation}
for every $w,w' \in F^{-n}W^u_{\alpha}(z)$. 
\end{lemma}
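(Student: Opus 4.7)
The plan is to prove the backward estimate (\ref{backward.distorsion.temperada}) first and deduce the forward estimate (\ref{forward.distorsion.temperada}) by re-indexing. Writing $w_{i}:=\pi(F^{-i}w)$, $w'_{i}:=\pi(F^{-i}w')$ and using $JF(F^{-i}w)=|\det Df(w_{i})|$, the non-degeneracy hypothesis (\ref{hipotesis.R.3}) applied term by term gives
\[
\log\prod_{i=0}^{\infty}\frac{JF(F^{-i}w)}{JF(F^{-i}w')}\;\le\;A\sum_{i=0}^{\infty}\frac{d(w_{i},w'_{i})}{d(w_{i},\mathcal{C})^{k}},
\]
so the task splits into controlling the numerators by a geometrically decaying sequence and the denominators from below by a subexponentially decaying one.

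For the numerators I would invoke the expansion conclusion (\ref{expansion.uniforme}) of Theorem \ref{variedad.inestable.local}: since $f^{i}$ restricted to $\pi(F^{-i}W^{u}_{\alpha}(z))$ expands distances by at least $\gamma(z)e^{i\chi/2}$, and $\pi|W^{u}_{\alpha}(z)$ is a diffeomorphism onto $B(\pi(z),\alpha(z))$, this yields
\[
d(w_{i},w'_{i})\;\le\;\gamma(z)^{-1}e^{-i\chi/2}\,d(\pi(w),\pi(w'))\;\le\;\gamma(z)^{-1}e^{-i\chi/2}\,d(w,w').
\]
For the denominators, the key is to transfer a tempered lower bound on the distance to $\mathcal{C}$ from the base pre-orbit $z$ to the entire local leaf. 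Lemma \ref{lema.acercamiento.conjunto.critico} together with Lemma \ref{lema.kernel.temperado} furnishes an $\epsilon$-tempered Borel function $\underline{\delta}_{\epsilon}\le\delta_{\mathcal{C}}$, so $d(\pi(F^{-i}z),\mathcal{C})\ge\underline{\delta}_{\epsilon}(z)e^{-i\epsilon}$. Because the backward contraction (\ref{backward.asintotics}) gives $d(w_{i},\pi(F^{-i}z))\le\beta(z)\alpha(z)e^{-i\chi/2}$, for $\epsilon<\chi/2$ the leaf displacement is exponentially smaller than the safety radius for all $i$, and by shrinking $\alpha(z)$ in a tempered fashion (absorbable into the $\alpha$ produced by Theorem \ref{variedad.inestable.local}) the reverse triangle inequality yields $d(w_{i},\mathcal{C})\ge\tfrac{1}{2}\underline{\delta}_{\epsilon}(z)e^{-i\epsilon}$ uniformly in $i$ and uniformly for $w$ in the leaf.

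Assembling the two bounds gives
\[
\log\prod_{i=0}^{\infty}\frac{JF(F^{-i}w)}{JF(F^{-i}w')}\;\le\;\frac{A\,2^{k}\,\gamma(z)^{-1}}{\underline{\delta}_{\epsilon}(z)^{k}}\,d(w,w')\sum_{i=0}^{\infty}e^{-i(\chi/2-k\epsilon)},
\]
and fixing $\epsilon_{0}$ so that $\chi/2-k\epsilon_{0}>0$ makes the geometric series summable. The resulting coefficient is a product and quotient of $\epsilon$-tempered functions, hence tempered at rate proportional to $(k+1)\epsilon$, which after rescaling $\epsilon$ furnishes the required $\epsilon$-tempered function $\gamma_{d}$ and proves (\ref{backward.distorsion.temperada}). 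For (\ref{forward.distorsion.temperada}) I set $u:=F^{n}w$ and $u':=F^{n}w'$, both in $W^{u}_{\alpha}(z)$, and re-index the finite product as $JF^{n}(w)/JF^{n}(w')=\prod_{j=1}^{n}JF(F^{-j}u)/JF(F^{-j}u')$; the identical estimates applied to this telescoped product produce $\exp\{\gamma_{d}(z)\,d(\pi(u),\pi(u'))\}=\exp\{\gamma_{d}(z)\,d(f^{n}(\pi(w)),f^{n}(\pi(w')))\}$ as needed. I expect the main obstacle to be the uniform lower bound on $d(w_{i},\mathcal{C})$ along the leaf: Lemma \ref{lema.acercamiento.conjunto.critico} is a $\bar\mu$-a.e.\ statement about a single pre-orbit, and promoting it into a bound valid for every $w\in W^{u}_{\alpha}(z)$ is precisely what forces the exponential contraction of the leaf to dominate the subexponential shrinking of the safety zone and hence fixes the threshold $\epsilon_{0}$.
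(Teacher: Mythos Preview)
Your approach is correct and essentially the same as the paper's: apply hypothesis (\ref{hipotesis.R.3}) term by term, control the numerators by the exponential contraction along backward iterates of the leaf, and control the denominators by a tempered lower bound on $\delta_{\mathcal C}$ along the orbit of $z$, then sum a geometric series. The paper proves the forward estimate directly rather than by re-indexing, but your reduction of (\ref{forward.distorsion.temperada}) to (\ref{backward.distorsion.temperada}) is perfectly valid.

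There is one simplification you are missing in the denominator step. You worry about ``promoting'' the $\bar\mu$-a.e.\ bound on $d(\pi(F^{-i}z),\mathcal C)$ to the entire leaf, and you propose to compare the leaf displacement $\beta(z)\alpha(z)e^{-i\chi/2}$ against the safety radius $\underline{\delta}_{\epsilon}(z)e^{-i\epsilon}$, possibly shrinking $\alpha$ further. This is unnecessary: Theorem \ref{variedad.inestable.local}(3) already gives $\alpha(\zeta)\le\tfrac12\delta_{\mathcal C}(\zeta)$ for every $\zeta$, and by the invariance property Theorem \ref{variedad.inestable.local}(4) one has $F^{-i}(W^u_\alpha(z))\subset W^u_\alpha(F^{-i}z)$, so $F^{-i}w\in W^u_\alpha(F^{-i}z)$ and the reverse triangle inequality yields directly
\[
d(w_i,\mathcal C)\;\ge\;\delta_{\mathcal C}(F^{-i}z)-\alpha(F^{-i}z)\;\ge\;\tfrac12\,\delta_{\mathcal C}(F^{-i}z).
\]
This is exactly how the paper handles the leaf bound, and it removes the ``main obstacle'' you anticipate. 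After this, the paper uses an $\epsilon/(2k)$-tempered minorant of $\delta_{\mathcal C}$ and an $\epsilon/2$-tempered majorant $\bar\beta$ of $\beta$ (in place of your $\gamma^{-1}$, which is equally good) so that the product $\gamma_d(z)=\dfrac{2^kA}{1-e^{-\chi/3}}\cdot\dfrac{\bar\beta(z)}{\delta^*(z)^k}$ is $\epsilon$-tempered without further rescaling.
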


See \cite[Lemma 3.3]{sanchez.salas.2003}. In particular, for every $0 < k < n$, $w,w' \in F^{-n}W^u_{\alpha}(z)$:
$$
\left\lvert \frac{JF^k(w)}{JF^k(w')}\right\rvert \leq \exp\{\gamma_d(z)\beta(z)\lambda^{n-k}d(f^n(\pi(w)),f^n(\pi(w')))\}
$$
with $\lambda = e^{-\chi/3} \in (0,1)$.
\begin{proof}
By the hypothesis (\ref{hipotesis.R.3}) and the estimate (\ref{backward.asintotics}) in the theorem (\ref{variedad.inestable.local}), for every 
$w,w' \in F^{-n}W^u_{\alpha}(z)$:
\begin{eqnarray*}
\log\frac{JF^n(w)}{JF^n(w')} & \leq & \sum_{j=0}^{n-1}|\log{JF(F^j(w))} - \log{JF(F^j(w'))}|\\
                             & \leq & \sum_{j=0}^{n-1}A\dfrac{d_M(\pi(F^j(w)),\pi(F^j(w')))}{d_M(\pi(F^j(w)),\mathcal{C})^k}\\
                             & \leq & \sum_{j=0}^{n-1}2^kA\dfrac{\beta(z)e^{-(n-j)\chi/2}d_M(f^n(\pi(w)),f^n(\pi(w')))}{d_M(\pi(F^{-n+j}(z)),\mathcal{C})^k},
\end{eqnarray*}
because
$$
d_M(\pi(F^j(w)),\mathcal{C}) \geq d_M(\pi(F^{-n+j}(z)),\mathcal{C}) - \alpha(F^{-n+j}(z)) \geq 1/2d_M(\pi(F^{-n+j}(z)),\mathcal{C})
$$ 
since $\alpha(z) < 1/2\delta_{\mathcal{C}}(z)$ and $F^j(w) \in W^u_{\alpha}(F^{-n+j}(z))$.
By lemma \ref{lema.acercamiento.conjunto.critico} and lemma \ref{kernel.temperado}, for every $\epsilon > 0$ there exist $\epsilon/2k$-tempered functions 
$\delta^*(z) = \bar\delta_{\epsilon/2k}(z) \geq \delta_{\mathcal{C}}(z)$, for $\bar\mu$-a.e. Therefore,
\begin{eqnarray*}
 \dfrac{1}{d_M(\pi(F^{-n+j}(z)),\mathcal{C})} & \leq & \dfrac{1}{{\delta}^*(F^{-n+j}(z))}\\
                                           &  =   & \dfrac{1}{{\delta}^*(z)}\dfrac{{\delta}^*(z)}{{\delta}^*(F^{-n+j}(F^n(z))} \\
                                           & \leq & \dfrac{e^{(n-j)\epsilon/2k}}{{\delta}^*(z)}.
\end{eqnarray*}
Let $\bar\beta \geq \beta$ be an $\epsilon/2$-tempered Borel function given by lemma \ref{lema.kernel.temperado}. Then, choosing $\epsilon_0 > 0$ such 
that $\chi/3 < \chi/2 - \epsilon/2k$ whenever $0 < \epsilon < \epsilon_0$, we have
\begin{eqnarray*}
 \log\left\lvert\frac{JF^n(w)}{JF^n(w')}\right\rvert \leq \dfrac{2^kA\bar\beta(z)}{({\delta}^*(z))^k}d_M(f^n(\pi(w)),f^n(\pi(w')))\sum_{j=0}^{n-1}e^{-(n-j)\chi/2}e^{(n-j)\epsilon/2k}\\
 \leq \dfrac{2^kA\bar\beta(z)}{({\delta}^*(z))^k}d_M(f^n(\pi(w)),f^n(\pi(w')))\sum_{j=0}^{n-1}e^{-(n-j)(\chi/2-\epsilon/2k)}\\
 \leq \dfrac{2^kA\bar\beta(z)}{({\delta}^*(z))^k}d_M(f^n(\pi(w)),f^n(\pi(w')))\sum_{n=0}^{+\infty}e^{-n\chi/3}\\
 \leq \dfrac{2^kA}{1-e^{-\chi/3}}\dfrac{\bar\beta(z)}{({\delta}^*(z))^k}d_M(f^n(\pi(w)),f^n(\pi(w')))
 \end{eqnarray*}
Now, $\bar\beta$ is $\epsilon/2$-tempered and ${\delta}^*$ is $\epsilon/2k$-tempered then
$$
\gamma_d(z) := \dfrac{2^kA}{1-e^{-\chi/3}}\dfrac{\bar\beta(z)}{({\delta}^*(z))^k},
$$
is $\epsilon$-tempered, proving (\ref{forward.distorsion.temperada}).
Similarly,
\begin{eqnarray*}
\log\prod_{i=0}^{+\infty}\frac{JF(F^{-i}w)}{JF(F^{-i}w')} & \leq & \sum_{i=0}^{+\infty}2^kA\dfrac{\beta(z)e^{-i\chi/2}d_M(\pi(w),\pi(w'))}{d_M(\pi(F^{-i}(z),\mathcal{C})^k} \\
                                                          & \leq & \dfrac{2^kA\bar\beta(z)}{({\delta}^*(z))^k}d_M(\pi(w),\pi(w'))\sum_{i=0}^{+\infty}e^{-i(\chi/2-\epsilon/2k)}\\
                                                          & \leq & \dfrac{2^kA\bar\beta(z)}{({\delta}^*(z))^k}d_M(\pi(w),\pi(w'))\sum_{i=0}^{+\infty}e^{-i\chi/3}\\
                                                          &   =  & \dfrac{2^kA}{1-e^{-\chi/3}}\dfrac{\bar\beta(z)}{({\delta}^*(z))^k}d_M(\pi(w),\pi(w')),
\end{eqnarray*}
so proving (\ref{backward.distorsion.temperada}).
\end{proof}

\subsection{Rectangles with regular returns}\label{subsec:rectangulo.retornos.regulares}
By the non-uniform expanding character of the dynamics to $(f,\mu)$ $Z$ produces a measurable lamination by local unstable manifolds, that is, 
we can cover $Z$, up to a zero measure set, by \emph{'flow boxes'} which are homeomorphic to the cartesian product of a Cantor set and an open 
subset of $M$. We call \emph{rectangles} to these 'flow boxes'. Let us state this more precisely. For this we recall that 
$B \subset M$ is an \emph{open disc} if it is diffeomorphic to an open ball $B(x, r) \subset M$.

\begin{definition}
Let $B \subset M$ an open disc and $K \subset {\mathcal F}(z) := \pi^{-1}\pi(z)$ a non trivial Cantor subset. We say that $\Sigma \subset Z$ is 
a \emph{rectangle with base $B$ and height $K$} if there exists a homeomorphism $\Phi: \Sigma \to B \times K$ such that
$$
\forall \ w \in K: \quad \pi_w := \pi|\Sigma_w : \Sigma_w \mapto B, \ \text{is a diffeomorphism onto} \ B.
$$
where $\Sigma_w = \Phi^{-1}(B \times \{w\})$ is the 'plaque' of 'height' $w \in K$.

$\Sigma$ is a \emph{regular rectangle} if the cross sections $K_w := \pi^{-1}\pi(w) \cap \Sigma$ are homeomorphic to each other by holonomy, 
that is,
$$
\phi_{w,w'} : K_w \mapto K_{w'} \quad \text{is a homeomorphism $\forall \ w,w' \in \Sigma$},
$$
where $\phi_{w,w'}(w'') = \Sigma_{w''} \cap K_{w'}$, and $\Sigma_{w''}$ is the plaque containing $w'' \in K_w$.
\end{definition}

\begin{remark}
In particular, if $\Sigma$ it is regular, $\Phi : \Sigma \mapto B \times {K}$, defined as $\Phi(w) = (\pi(w),\phi_{w,z}(w))$ is a homeomorphism.
\end{remark}

\textbf{Important warning}: \emph{All the rectangles appearing in our construction are regular, so we shall omit to mention this in our 
statements, unless notice in contrary.}

\begin{remark}
It follows from the local unstable manifold theorem that $Z$ has a measurable laminated structure: for every $x \in M$ there exists 
an open ball $B_x \subset M$ such that there exists an invertible measurable map with measurable inverse, 
$\phi: \pi^{-1}(B_x) \to B_x \times \mathcal{F}$. Moreover, for every $n > 0$ there exists a compact set $Z_n \subset Z$ with $\bar\mu(Z_n) \geq 1 - 2^{-n}$ such that $Z_n$ is covered by 
rectangles $\Sigma_i$ which are domains of local charts $\Phi_i:\Sigma_i \to B_i \times K_i$, where $\Phi_i$ are homeomorphisms. For this we 
just take $Z_n$ a large compact set such that $\alpha$ and $z \mapsto W^u_{\alpha}(z)$ restricted to $Z_n$ are continuous.
\end{remark}

\begin{definition}\label{definicion.rectangulo.retornos.regulares}
We say that a rectangle $\Sigma \subset Z$ of base $B$ and fiber $K \subset \mathcal{F}(z)$ has the property of \emph{regular returns} if for 
every $z,z' \in \Sigma$ and for all $n,m > 0$:
\begin{enumerate}
\item $F^{-n}\Sigma_z$ and $F^{-m}\Sigma_{z'}$ (resp. $\pi{F^{-n}\Sigma_z}$ and $\pi{F^{-m}\Sigma_{z'}}$) are disjoint or nested;
\item $\pi|F^{-n}\Sigma_z: F^{-n}\Sigma_z \to \pi{F^{-n}\Sigma_z}$ is a diffeomorphism; 
\item for every $z \in \Sigma$ and every $n > 0$, $f^n : \pi{F^{-n}\Sigma_z} \to B$ is an expanding diffeomorphism with bounded volume 
distortion, that is, there exists $C>0$ such that for every $z \in \Sigma$: $f^n: \pi{F^{-n}\Sigma_z} \to \pi{\Sigma_z}$ is a diffeomorphism 
satisfying the following estimates:
\begin{equation}\label{expansion.uniforme.1}
\|Df^n(\pi(w))v\| \geq C^{-1}e^{n\chi/2}\|v\|,
\end{equation}
for every $v \in T_{\pi(w)}M$ and $w \in {F^{-n}\Sigma_z}$;
\begin{equation}\label{forward.distorsion.limitada.1}
\dfrac{Jf^n(\pi(w))}{Jf^n(\pi(w'))} \leq \exp(Cd(f^n(\pi(w)),f^n(\pi(w'))), \ \forall \ w,w' \in {F^{-n}\Sigma_z},
\end{equation}
for every $w,w' \in {F^{-n}\Sigma_z}$;
\begin{equation}\label{backward.distorsion.limitada.1}
\prod_{i=0}^{+\infty}\dfrac{Jf(\pi(F^{-i}w))}{Jf(\pi(F^{-i}w'))} \leq \exp(Cd(\pi(w),\pi(w'))),
\end{equation}
for every $w,w' \in {F^{-n}\Sigma_z}$.
\end{enumerate}
\end{definition}

See \cite[Definition 23]{doob.2012}.

\begin{lemma}\label{lema.cilindro.retornos.regulares}
Let $(f,\mu)$ be a dynamical system satisfying the hypotheses of the theorem \ref{main.0}. Then there exists a rectangle with regular returns 
$\Sigma \subset Z$ with positive measure.
\end{lemma}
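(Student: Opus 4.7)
The plan is to locate a density point inside a large compact set on which all the tempered functions appearing in Theorem \ref{variedad.inestable.local} and Lemma \ref{lema.distorsion.temperada} are bounded by uniform constants, and then to carve out a rectangle whose base is a carefully chosen ``nice'' ball around the projection of that point. First, fix $\epsilon>0$ sufficiently small. Theorem \ref{variedad.inestable.local} gives $\epsilon$-tempered functions $\alpha,\beta,\gamma$ and Lemma \ref{lema.distorsion.temperada} gives $\gamma_d$, all positive $\bar\mu$-a.e. For each $N>0$ consider the level set
$$
Z_N:=\{z\in Z_0: \alpha(z)\geq 1/N,\ \beta(z),\gamma(z)^{-1},\gamma_d(z)\leq N,\ \delta_{\mathcal C}(z)\geq 1/N\}.
$$
Taking $N$ large, $\bar\mu(Z_N)$ becomes arbitrarily close to $1$, and by the remark after the definition of regular rectangles $Z_N$ is covered by finitely many rectangle charts $\Phi_i:\Sigma_i\to B_i\times K_i$ of the continuous lamination. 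Over $Z_N$ one has uniform size of local unstable leaves, uniform backward contraction, and uniform distortion constants; this is the main payoff of restricting to $Z_N$.

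Next I would pick a Lebesgue density point $z^*\in Z_N$ of $\bar\mu|_{Z_N}$ that lies in one such chart $\Sigma_{i^*}$. For each small $r>0$ let $B_r=B(\pi(z^*),r)\subset\pi(\Sigma_{i^*})$ and set
$$
\Sigma(r):=\Phi_{i^*}^{-1}\bigl(B_r\times(K_{i^*}\cap \Phi_{i^*}(Z_N\cap\Sigma_{i^*}))\bigr),
$$
which inherits the regular rectangle structure from $\Sigma_{i^*}$, with base $B_r$ and fiber a Cantor subset of $\mathcal F(z^*)$. Because $z^*$ is a density point for $\bar\mu|_{Z_N}$, $\bar\mu(\Sigma(r))>0$ for every sufficiently small $r$.

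I would then verify conditions (2) and (3) of Definition \ref{definicion.rectangulo.retornos.regulares} on $\Sigma(r)$ for any small $r$. Condition (2) is immediate: for $z\in\Sigma(r)\subset Z_N$ the backward image $F^{-n}\Sigma(r)_z$ sits inside the local unstable manifold $W^u_\alpha(F^{-n}z)$, where $\pi$ is a diffeomorphism by Theorem \ref{variedad.inestable.local}(2). The uniform expansion bound (\ref{expansion.uniforme.1}) with constant $C=\max\{N,\beta_0\}$ follows from (\ref{expansion.uniforme}) applied to $F^{-n}z$ lying on the unstable leaf of $z$, using that $\gamma$ is bounded below on $Z_N$; the forward and backward bounded distortion (\ref{forward.distorsion.limitada.1})--(\ref{backward.distorsion.limitada.1}) follow from (\ref{forward.distorsion.temperada})--(\ref{backward.distorsion.temperada}) with $C=N$, again because $\gamma_d\leq N$ on $Z_N$. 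None of these requires further work beyond carefully reading off the estimates from the uniform bounds on $Z_N$.

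The main obstacle is condition (1), the \emph{disjoint or nested} property of the projected preimages $\pi F^{-n}\Sigma(r)_z$ and $\pi F^{-m}\Sigma(r)_{z'}$. The plan here is to force the base to be a \emph{nice set} in the sense of Martens-de~Melo / Przyticky-Rivera-Letelier, namely
$$
f^k(\partial B_r)\cap B_r=\varnothing,\qquad \forall\, k\geq 1.
$$
Assuming this, a standard inductive topological argument shows that any connected component of $f^{-n}(B_r)$ is either disjoint from $B_r$ or contained in $B_r$, and more generally any two connected preimage components of $B_r$ are either disjoint or one is contained in the other; combined with condition (2) this transfers to the plaques $\pi F^{-n}\Sigma(r)_z$. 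To secure the nice-set property I would argue that the set of bad radii
$$
R_{\text{bad}}:=\{r\in(0,r_0):\exists k\geq 1,\ f^k(\partial B_r)\cap \partial B_r\neq\varnothing\}
$$
has Lebesgue measure zero: for each fixed $k$, $R_{\text{bad}}^{(k)}$ is contained in the image under $y\mapsto d(y,\pi(z^*))$ of the set where $d(f^k(y),\pi(z^*))=d(y,\pi(z^*))$, which is a proper real-analytic condition away from the $\mu$-null set of periodic points of period $k$; a Sard/Fubini argument applied to the smooth map $(y,k)\mapsto d(f^k(y),\pi(z^*))-d(y,\pi(z^*))$ on the complement of $\mathcal{C}\cup\mathcal{S}$ inside a small neighborhood of $\pi(z^*)$ then rules out a positive measure set of bad radii. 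Since the set of density points has full measure in $Z_N$ and the set of good radii has full measure in $(0,r_0)$, I can arrange simultaneously $\bar\mu(\Sigma(r))>0$ and the nice-set property. This is the delicate technical step; everything else is bookkeeping with the tempered estimates.
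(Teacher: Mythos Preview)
Your treatment of conditions (2) and (3) of Definition \ref{definicion.rectangulo.retornos.regulares} is essentially the same as the paper's: restrict to a large compact set on which the tempered functions of Theorem \ref{variedad.inestable.local} and Lemma \ref{lema.distorsion.temperada} are uniformly bounded, and read off the estimates. That part is fine.

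The gap is in condition (1), the disjoint-or-nested property. Your plan is to find a \emph{round ball} $B_r$ that is a nice set, i.e.\ $f^k(\partial B_r)\cap B_r=\varnothing$ for all $k\geq1$, and you propose a Sard/Fubini argument to show that Lebesgue-a.e.\ radius works. There are several problems. First, your set $R_{\text{bad}}$ only records intersections with $\partial B_r$, not with $B_r$; the nice-set condition you actually need is $f^k(\partial B_r)\cap B_r=\varnothing$, which is an \emph{open} condition on points of the sphere and can fail on a whole interval of radii (e.g.\ if $f^k$ sends the entire sphere into the ball). Second, $f$ is only $C^2$, so the ``proper real-analytic condition'' claim is unfounded; the level set of a $C^2$ function need not be null. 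Third, you give no argument to control the union over all $k\geq1$. More fundamentally, for non-conformal maps in dimension $\geq2$ the existence of nice \emph{round} balls is not known in general, which is precisely why the paper (following \cite{doob.2012}) takes a different route.

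The paper's argument for (1) proceeds in three stages. First, a Borel--Cantelli argument (\cite[Lemma 21]{doob.2012}) shows that for Lebesgue-a.e.\ $r$ the distance $d(\pi(F^{-n}z),\partial B(x,r))$ decays only subexponentially in $n$, while $\diam(\pi(F^{-n}W(z)))$ decays exponentially; comparing rates yields the non-intersection condition for all $n\geq N_0$ on a large subrectangle $\Sigma_2$. Second, one rules out infinitely many remaining ``short'' bad returns by a periodic-point argument (\cite[Lemma 22]{doob.2012}): if such returns persisted on a set of positive measure, the preimages would accumulate on $\mathrm{Fix}(f^{N_0!})$, forcing $\mu$ to charge a periodic orbit and contradicting absolute continuity. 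This bounds the last bad time by some $N_1$ on a further subrectangle $\Sigma_3$. Third---and this is the key device you are missing---one \emph{pulls back}: take $B$ to be a connected component of $f^{-N_1}(B(x,r))$ and set $\Sigma=\pi^{-1}(B)\cap F^{-N_1}(\Sigma_3)$. The pullback absorbs the finitely many residual bad times, so the non-intersection condition now holds for all $n\geq0$. The resulting base $B$ is an open disc but not a metric ball; insisting on a round ball, as you do, is exactly what makes the nice-set approach intractable here.
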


Essentially this was proved in \cite[Theorem 24]{doob.2012}, so we simply outline main steps of the proof and refer to that paper for details.

\begin{proof}
The idea is to prove that there exists an open disc $B$ and a rectangle of positive measure $\Sigma$ over $B$ such that
\begin{equation}\label{non.intersecting.condition.1}
\forall \ z \in \Sigma: \quad \pi(F^{-n}W(z)) \cap \partial{B} = \emptyset, \quad\forall \ n \geq 0,
\end{equation}
for it follows easily from the above condition that for every $z,z' \in \Sigma$ and for every $n,m \geq 0$, 
$F^{-n}B_z$ (resp. $\pi(F^{-n}B_z)$) and $F^{-m}B_{z'}$ (resp. $\pi(F^{-m}B_{z'})$) are disjoint or nested, where 
$B_z = \pi^{-1}(B) \cap W(z)$ is a diffeomorphic copy of $B$ in the unstable plaque $W(z)$.

We start choosing a compact set $\Sigma_0$ with $\bar\mu(\Sigma_0) \geq 1 - \delta$ such that they are continuous the local unstable manifolds 
$W^u_{\alpha}$ and the functions $\alpha, \beta, \gamma$ and $\gamma_d$ in theorem \ref{variedad.inestable.local} and lemma 
\ref{lema.distorsion.temperada} vary continuously with $z \in \Sigma_0$. This is possible by Egorov-Lusin's theorem. 

In particular, there exists $C > 0$ such that for every $z \in \Sigma_0$:
\begin{equation}\label{expansion.uniforme.0}
\|Df^n(\pi(w))v\| \geq C^{-1}e^{n\chi/2}\|v\|
\end{equation}
for every $v \in T_{\pi(w)}M$ and $w \in F^{-n}W^u_{\alpha}(z)$;
\begin{equation}\label{forward.distorsion.limitada.0}
\dfrac{Jf^n(\pi(w))}{Jf^n(\pi(w'))} \leq \exp(Cd(f^n(\pi(w)),f^n(\pi(w'))), 
\end{equation}
for every $w,w' \in {F^{-n}W^u_{\alpha}(z)}$ y 
\begin{equation}\label{backward.distorsion.limitada.0}
\prod_{i=0}^{+\infty}\dfrac{Jf(\pi(F^{-i}w))}{Jf(\pi(F^{-i}w'))} \leq \exp(Cd(\pi(w),\pi(w'))), 
\end{equation}
for every $w,w' \in {F^{-n}W^u_{\alpha}(z)}$. For this it suffices to take $C = \max_{z \in \Sigma_0}\{\gamma(z)^{-1}, \gamma_d(z)\}$.

Let us denote $W(z) = W^u_{\alpha}(z)$ and let $\alpha_0 > 0$ be the minimum of $\alpha|\Sigma_0$. We choose $x \in M$ such that 
$$
\Sigma_1 = \pi^{-1}(B(x,\alpha_0/2)) \cap \bigcup_{z \in \Sigma_0}W(z)
$$
has positive measure. Then $\Sigma_1$ is a rectangle. 

Now we prove that there exists an open ball $B \subset B(x,\alpha_0/2)$ and rectangle $\Sigma_2 \subset \Sigma_1$ over $B$, with positive 
measure and $N_0 > 0$ such that (\ref{non.intersecting.condition.1}) holds for every $n \geq N_0$.
\ 
\\
\\
\textbf{First Claim}: \emph{there exists $\alpha_0/8 < r < \alpha_0/4$, an integer $N_0 > 0$ and a subset $\Sigma_{0,N_0} \subset \Sigma_0$ of 
large measure such that,
$$
\forall \ z \in \Sigma_{0,N_0}: \ \diam(\pi(F^{-n}W(z))) < d(\pi(F^{-n}(z)),\partial{B(x,r)})/2, \ \forall \ n \geq N_0.
$$
}

By \cite[Lemma 21]{doob.2012} $d(\pi(F^{-n}(z)),\partial{B(x,r)})$ decays subexponentially for $\bar\mu$-a.e. for Lebesgue almost every 
$r \in (\alpha_0/8,\alpha_0/4)$. Fix one such $r$. Then for any small $\epsilon > 0$ there exists a Borel function 
$\delta$ with subexponential growth such that
$$
d(\pi(F^{-n}(z)),\partial{B(x,r)}) \geq \delta(z)e^{-n\epsilon}.
$$
By the local unstable manifold theorem \ref{variedad.inestable.local} $\diam(\pi(F^{-n}W(z)))$ decays exponentially. Moreover, by 
(\ref{expansion.local.f^n}) and definition (\ref{definicion.beta.0}) of $\beta$, 
$$
\diam(\pi(F^{-n}W(z))) \leq \beta(z)\alpha(z)e^{-n\chi/2}, \quad\forall \ n > 0.
$$
To find $\Sigma_{0,N_0}$ we let $0 < \eta < \chi$ be sufficiently small, define $\epsilon = \chi/2 - \eta/2$, $\alpha_{max} = \max{\alpha|\Sigma_0}$, 
$\beta_{max} = \max{\beta|\Sigma_0}$ and observe that
$$
\diam(\pi(F^{-n}W(z))) < d(\pi(F^{-n}(z)),\partial{B(x,r)})/2 \quad\text{whenever}\quad \beta_{max}\alpha_{max} < \dfrac{\delta(z)e^{n\eta/2}}{2}.
$$
We define a non decreasing sequence of Borel sets 
$$
X(n) = \left\{z \in Z: \beta_{max}\alpha_{max} < \dfrac{\delta(z)e^{n\eta/2}}{2}\right\}.
$$
covering $Z$ up-to a $\bar\mu$-zero set. Let $\Sigma_{0,n} := \Sigma_0 \cap X(n)$ and 
choose $N_0 > 0$ sufficiently large such that $\bar\mu(\Sigma_{0,N_0}) \geq (1-\delta)\bar\mu(\Sigma_0)$. This proves the first claim. Therefore,
$$
\Sigma_2 = \pi^{-1}(B(x,r)) \cap \bigcup_{z \in \Sigma_{0,N_0}}W(z)
$$ 
is a rectangle $\Sigma_2 \subset \Sigma_1$ with positive measure such that
\begin{equation}\label{non.intersecting.condition.2}
\forall \ z \in \Sigma_2: \quad \pi(F^{-n}W(z)) \cap \partial{B(x,r)} = \emptyset, \quad\forall \ n \geq N_0.
\end{equation}

Now, notice that (\ref{non.intersecting.condition.2}) is equivalent to the following: if $n \geq 0$ and there exists $k > 0$ 
such that
\begin{equation}\label{non.intersecting.condition.3}
\pi(F^{-n-k}W(z)) \cap \partial\pi(F^{-n}{B'_z}) \not= \emptyset. 
\end{equation}
then $k \leq N_0$. Therefore, for each $z \in \Sigma_2$ there exists a non decreasing sequence of positive integers $\{n_j(z)\}$ such that, 
for every $j > 0$ there exists $0 < k_j \leq N_0$ with
$$
\pi(F^{-n_j(z)-k_j}W(z)) \cap \partial\pi(F^{-n_j(z)}{B'_z}) \not= \emptyset.
$$
The sequence $\{n_j(z)\}$ is either finite or infinite. 
\ 
\\
\\
\textbf{Second Claim}: \emph{$\{n_j(z)\}$ is finite $\bar\mu$-a.e. $z \in \Sigma_2$.} 
\ 
\\
\\
Indeed, let us suppose that $C \subset \Sigma_2$ is a Borel subset with positive measure such that $\{n_j(z)\}$ is infinite for every $z \in C$. 
By \cite[Lemma 22]{doob.2012} there exists a continuous positive function $\theta: [0,+\infty) \to [0,+\infty)$ with $\theta(0) = 0$ such that for 
every set $V$ such that $V \cap f^k(V) \not= \emptyset$ for some $0 < k \leq N$ then $\dist(V,Fix(f^{N!})) < \theta(\diam(V))$. As 
$\diam(\pi(F^{-n}W(z))) \to 0$ exponentially then
$$
\dist(\pi(F^{-n}W(z)),Fix(f^{N_0!})) \to 0,
$$
uniformly in $z \in C$. Hence, for every $\epsilon > 0$
$$
\mu(B(Fix(f^{N_0!}),\epsilon)) = \bar\mu(\pi^{-1}B(Fix(f^{N_0!}),\epsilon)) \geq \bar\mu(F^{-n}C) = \bar\mu(C) > 0.
$$
Therefore $\mu$ must be concentrated on a repelling periodic orbit contradicting that $\mu$ is absolutely continuous. This proves the 
Second Claim. Let $Y_N = \{z \in \Sigma_2: \max_jn_j(z) \leq N\}$. Then there exists $N_1 > 0$ such that 
$\bar\mu(Y_{N_1}) \geq (1-\delta)\bar\mu(\Sigma_2)$. Hence
$$
\Sigma_3 = \pi^{-1}(B') \cap \bigcup_{z \in Y_{N_1}}W(z)
$$
is a subrectangle of $\Sigma_2$ with $\bar\mu(\Sigma_3) \geq (1-\delta)\bar\mu(\Sigma_2)$ such that
\begin{equation}\label{non.intersecting.condition.4}
\forall \ z \in \Sigma_3: \ \pi(F^{-n-k}W(z)) \cap \partial\pi(F^{-n}{B'_z}) = \emptyset, \ \forall \ n \geq N_1, k > 0.
\end{equation}
Notice that $F^{-N_1}(\Sigma_3)$ is a rectangle satisfying (\ref{expansion.uniforme.1}), 
(\ref{forward.distorsion.limitada.1}) and (\ref{backward.distorsion.limitada.1}). Let $B$ be a connected component of $f^{-N_1}(B')$ 
such that
$$
\Sigma = \pi^{-1}(B) \cap F^{-N_1}(\Sigma_3)
$$
has positive measure. $\Sigma$ is a rectangle satisfying (\ref{expansion.uniforme.1}), (\ref{forward.distorsion.limitada.1}) and 
(\ref{backward.distorsion.limitada.1}) and (\ref{non.intersecting.condition.1}) and has therefore the regular returns property.
\end{proof}

\subsection{Pesin partitions and density of conditional measures}\label{subsec:densidades.condicionales}

As $\bar\mu(\Sigma) > 0$, by the ergodicity of $\mu$, there exists for $\bar\mu$-a.e. $z \in Z$ a first time of entry $e(z) > 0$
of the positive orbit of $z$ into $\Sigma$: 
$$
e(z) = \min\{n > 0: F^n(z) \in \Sigma, \ F^k(z) \not\in \Sigma, \ 0 \leq k < n\}
$$
We define a partition $\xi$ of $Z$ as 
$$
\xi(z) = F^{-e(z)}\Sigma_{F^{e(z)}(z)}, \ \text{if} \ z \not\in \Sigma \ \text{and} \ \xi(z) = \Sigma_z, \ \text{si} \ z \in \Sigma.
$$

\begin{lemma}\label{lema.particion.pesin}
\begin{enumerate}
\item $\xi,$ is a measurable partition of $Z$ subordinate to the unstable lamination of $Z$, this is $\xi(z)\subset W_{\alpha}^{u}(z)$, $\bar\mu$-a.e.;
\item $\xi$ is increasing: $F^{-1}\xi \geq \xi$: $F^{-1}(P) \cap Q \not= \emptyset$ implies $F^{-1}(P) \subset Q$, for every $P,Q \in \xi$;
\item $\xi$ generate the $\sigma$-algebra of Borelians of $Z$: $\bigvee_{n=0}^{+\infty}F^{-n}\xi = \epsilon$, where $\epsilon(x) = \{x\}$ is 
the largest partition of $Z$ into points; 
\item $\pi | \xi(z) : \xi(z) \to M$ is a diffeomorphism onto its image;
\item $F^{e(z)} : \xi(z) \to \Sigma_{F^{e(z)}(z)}$ is projected onto a diffeomorphism $f^{e(z)}: \pi\xi(z) \to B$ satisfying 
(\ref{expansion.uniforme.1}), (\ref{forward.distorsion.limitada.1}) and (\ref{backward.distorsion.limitada.1}).
\end{enumerate}
\end{lemma}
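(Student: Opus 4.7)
The plan is to verify each of the five claims by combining the definition of $\xi$ with the structure of the rectangle with regular returns from Definition \ref{definicion.rectangulo.retornos.regulares} and the invariance of the unstable lamination from Theorem \ref{variedad.inestable.local}. Since $\bar\mu(\Sigma)>0$ and $(F,\bar\mu)$ is ergodic, the first entry time $e(z)$ is defined and finite for $\bar\mu$-a.e.\ $z$, and is measurable as a first hitting time of a Borel set. Consequently $\xi$ is a measurable partition on a totally $F$-invariant set of full $\bar\mu$-measure, on which I work throughout.

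Items (1), (4) and (5) are essentially tautological. For (1), each plaque $\Sigma_w$ of $\Sigma$ lies in $W^u_{\alpha}(w)$ by construction of the rectangle, and by invariance of the unstable lamination (item 4 of Theorem \ref{variedad.inestable.local}) $F^{-n}W^u_{\alpha}(F^n(z)) \subset W^u_{\alpha}(z)$, so $\xi(z) = F^{-e(z)}\Sigma_{F^{e(z)}(z)} \subset W^u_{\alpha}(z)$. Item (4) is precisely item (2) of Definition \ref{definicion.rectangulo.retornos.regulares} applied with $n=e(z)$. For (5) one uses $\pi\circ F = f\circ\pi$ to obtain $f^{e(z)}\circ\pi|\xi(z) = \pi\circ F^{e(z)}|\xi(z)$, whence $f^{e(z)}: \pi\xi(z)\to B$ is a composition of diffeomorphisms; the estimates \eqref{expansion.uniforme.1}, \eqref{forward.distorsion.limitada.1} and \eqref{backward.distorsion.limitada.1} are then item (3) of Definition \ref{definicion.rectangulo.retornos.regulares} with $n=e(z)$.

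For item (2) the goal is $F^{-1}\xi(F(z)) \subset \xi(z)$. When $z\notin\Sigma$, a short case analysis on $F(z)\in\Sigma$ versus $F(z)\notin\Sigma$ gives $F^{-1}\xi(F(z)) = \xi(z)$ by telescoping the defining formula. The substantive case is $z\in\Sigma$: let $n\ge 1$ be the first return time of $z$ to $\Sigma$ after time $0$, so that $F^{-1}\xi(F(z)) = F^{-n}\Sigma_{F^n(z)}$ while $\xi(z) = \Sigma_z$. Both sets contain $z$ and are therefore not disjoint; by the \emph{disjoint or nested} property (item (1) of Definition \ref{definicion.rectangulo.retornos.regulares}) one contains the other. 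Uniform expansion \eqref{expansion.uniforme.1} forces $\pi(F^{-n}\Sigma_{F^n(z)})$ to have strictly smaller $M$-diameter than $\pi\Sigma_z=B$, identifying the inclusion as $F^{-n}\Sigma_{F^n(z)} \subset \Sigma_z$.

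For item (3), I assume $w,w'$ lie in the same atom of $F^{-n}\xi$ for every $n\geq 0$, so that $F^n(w)$ and $F^n(w')$ belong to a common plaque $\xi(F^n(w)) \subset W^u_{\alpha}(F^n(w))$ for all $n$. The $M$-diameter of $\pi\xi(F^n(w))$ is uniformly bounded by some constant $D$, because $\xi(F^n(w))$ is a backward $F$-image of a plaque of $\Sigma$ and $f$ expands uniformly along local unstable manifolds. Applying the backward asymptotic estimate \eqref{backward.asintotics} coordinatewise and summing the geometric series defining the metric on $Z$ yields
\[
d(w,w') \leq C\,\beta(F^n(w))\, e^{-n\chi/2}\, D,
\]
for an absolute constant $C$. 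The temperedness of $\beta$ (Lemma \ref{lema.kernel.temperado}) upgrades this to $d(w,w')\leq C\,\beta_{\epsilon}(w)\,D\,e^{-n(\chi/2-\epsilon)}$; choosing $\epsilon<\chi/2$ and letting $n\to\infty$ forces $w=w'$. The main obstacle I foresee is the case analysis for item (2), which depends crucially on the disjoint-or-nested geometry of $\Sigma$; everything else is a direct transcription of the structural results already established.
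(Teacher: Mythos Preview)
The paper does not actually prove this lemma: after the statement it simply refers the reader to \cite[Proposition 3.2]{ledrappier.1981} and \cite[Sections 4 and 6]{doob.2012}. Your proposal therefore supplies what the paper omits, and it is essentially correct. Items (1), (4) and (5) follow exactly as you say from the invariance in Theorem~\ref{variedad.inestable.local}(4) together with items (2)--(3) of Definition~\ref{definicion.rectangulo.retornos.regulares}, and your argument for item (3) via the backward contraction estimate \eqref{backward.asintotics} and the temperedness of $\beta$ is clean.

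One small refinement is needed in item (2). You decide the direction of the inclusion $F^{-n}\Sigma_{F^n(z)}\subset\Sigma_z$ by comparing diameters via \eqref{expansion.uniforme.1}, but for small $n$ the constant $C^{-1}e^{n\chi/2}$ may be below $1$, so $\pi F^{-n}\Sigma_{F^n(z)}$ need not have strictly smaller diameter than $B$. The inclusion nevertheless holds directly from the boundary condition \eqref{non.intersecting.condition.1} that underlies the construction of $\Sigma$ in Lemma~\ref{lema.cilindro.retornos.regulares}: $\pi F^{-n}\Sigma_{F^n(z)}$ is connected, contains the interior point $\pi(z)\in B$, and does not meet $\partial B$, hence lies inside $B$; since both sets live in the same local unstable leaf and $\pi$ restricted to that leaf is a diffeomorphism, this lifts to $F^{-n}\Sigma_{F^n(z)}\subset\Sigma_z$. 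With this adjustment your case analysis for (2) goes through.
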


See \cite[Proposition 3.2]{ledrappier.1981}. Compare also \cite[Section 4 and Section 6]{doob.2012}.

\begin{definition}
$\xi$ is a \emph{Pesin partition} for $(f,\mu)$.
\end{definition}

Let $\xi(z)$ the atom of the Pesin partition $\xi$ which contains $z \in Z$. By Rochlin's theorem, there is a system of conditional measures 
$\bar\mu_{\xi(z)}$ such that 
$$
\bar\mu(A) = \int_{Z/\xi}\bar\mu_{\xi(z)}(A \cap \xi(z))d\bar\mu_{\xi}(\xi(z)),
$$
where $\bar\mu_{\xi}$ is the quotient measure on $X/\xi$, that is,
$$
\int_Z\phi(w)d\bar\mu(w) = \int_{Z/\xi}\left(\int_{\xi(z)}\phi(w)d\bar\mu_{\xi(z)}(w)\right)d\bar\mu_{\xi}(\xi(z)),
$$
for every Borel measurable real function $\phi$.

Lets denote $\Vol_{\xi(z)}$ the pullback of the Riemannian volume of $M$ to the atom $\xi(z)$ via the diffeomorphism $\pi|\xi(z) : \xi(z) \to M$, that is, 
$\Vol_{\xi(z)}(A) = \Vol(\Phi^{-1}_z(A)$, for all Borelian $A \subset \xi(z)$, where $\Phi_z : B(\pi(z),\alpha(z)) \to W^u_{\alpha}(z)$ is the 
parametrization of the local unstable manifolds $W^u_{\alpha}(z)$ containing $\xi(z)$.

Since $\mu$ is absolutely continuous with respect to the Riemannian volume, then it satisfies the Pesin-Rochlin formula 
$h(\mu) = \sum_{\chi_i(x) > 0}\chi_i(x)\dim{E_i}(x)d\mu(x)$. This allows to compute explicitly the density of the conditional measures 
$\bar\mu_{\xi(z)}$ with respect to the volume $\Vol_{\xi(z)}$. 

\begin{lemma}\label{lema.densidad.condicional.barmu}
\begin{equation}\label{densidad.condicional.barmu}
\bar\mu_{\xi(z)}(A) = \dfrac{\int_A\Delta(z,w)d\Vol_{\xi(z)}(w)}{\int_{\xi(z)}\Delta(z,w)d\Vol_{\xi(z)}(w)}, 
\end{equation}
for every Borel subset $A \subset \xi(z)$, where 
$$
\Delta(z,w) = \prod_{i=0}^{+\infty}\dfrac{Jf(\pi(F^{-i}z))}{Jf(\pi(F^{-i}w))}.
$$
\end{lemma}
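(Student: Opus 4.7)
The plan is to verify the classical Ledrappier formula for SRB-type conditional measures on unstable manifolds, using the tools established earlier in the section: tempered distortion and the structural properties of the Pesin partition $\xi$. Three ingredients are needed: (a) well-definedness of the infinite product $\Delta(z,w)$, (b) a cocycle/transformation identity linking $\Delta$ and $Jf$, and (c) the identification of the candidate density with $d\bar\mu_{\xi(z)}/d\Vol_{\xi(z)}$ via the equality case of the Pesin–Rochlin formula, which applies because $\mu$ is $\Vol$-absolutely continuous.

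First, I would fix $z$ and observe that $\xi(z) \subset W^u_\alpha(z)$, so the backward distortion estimate (\ref{backward.distorsion.temperada}) of lemma \ref{lema.distorsion.temperada} gives, for every $w,w' \in \xi(z)$,
\begin{equation*}
\prod_{i=0}^{+\infty}\frac{JF(F^{-i}w)}{JF(F^{-i}w')} \leq \exp(\gamma_d(z)\,d(w,w')).
\end{equation*}
In particular $\Delta(z,w) = \prod_{i\ge 0} Jf(\pi F^{-i}z)/Jf(\pi F^{-i}w)$ converges absolutely and is a positive continuous function of $w \in \xi(z)$ with $\Delta(z,z) = 1$ and $\Delta(z,\cdot)$ bounded above and below by constants depending on $\gamma_d(z)$ and $\diam \pi(\xi(z))$. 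Hence the right-hand side of (\ref{densidad.condicional.barmu}) defines a probability measure $\tilde\mu_{\xi(z)}$ on $\xi(z)$, absolutely continuous with respect to $\Vol_{\xi(z)}$ with density bounded away from $0$ and $\infty$.

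Next, I would check the key cocycle identity, which is immediate by telescoping:
\begin{equation*}
\Delta(F(z),F(w)) \cdot \frac{Jf(\pi(w))}{Jf(\pi(z))} = \Delta(z,w),
\end{equation*}
for $w \in \xi(z)$ with $F(w) \in \xi(F(z))$ (which happens when $F$ maps $\xi(z)$ into a single atom, e.g. on first returns to $\Sigma$). This identity shows that the family $\{\tilde\mu_{\xi(z)}\}$ transforms correctly under $F$: if one pushes $\tilde\mu_{\xi(z)}$ forward by $F^n$ and then subdivides according to a refinement, the resulting conditional measures reproduce $\tilde\mu_{\xi(F^n z)}$ up to normalization, using only the chain rule $JF^n = \prod_{j<n} Jf \circ \pi \circ F^j$ and the change-of-variables formula on the diffeomorphisms $\pi|\xi(z)$.

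Finally, I would identify $\tilde\mu_{\xi(z)}$ with $\bar\mu_{\xi(z)}$. Because $\mu$ is an expanding $\Vol$-absolutely continuous invariant probability, its entropy attains the Pesin–Rochlin upper bound $h(\mu) = \int \sum_{\chi_i > 0} \chi_i \dim E_i \, d\mu = \int \log Jf \, d\mu$. Ledrappier's theorem \cite[Theorem 8]{ledrappier.1981} characterizes the equality case: the conditional measures of $\bar\mu$ on the atoms of any measurable partition subordinate to the unstable lamination (which $\xi$ is, by lemma \ref{lema.particion.pesin}) are absolutely continuous with respect to the induced Riemannian volume, and the density is uniquely determined by the cocycle identity above together with the normalization $\int_{\xi(z)}\rho\, d\Vol_{\xi(z)} = 1$. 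The unique solution is $\rho(w) = \Delta(z,w)/\int_{\xi(z)}\Delta(z,\cdot)\,d\Vol_{\xi(z)}$, giving (\ref{densidad.condicional.barmu}).

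The main obstacle is the last step: the uniqueness of the density among solutions of the transformation rule is not purely formal — it genuinely uses that equality holds in Pesin's formula, which in turn depends on $\mu \ll \Vol$. Once this is granted (as a black box from Ledrappier's work), the rest is a direct verification using the tempered distortion estimates from subsection \ref{subsec:distorsion.temperada}.
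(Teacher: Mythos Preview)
Your proposal is correct and aligns with the paper's treatment: the paper does not give an independent proof of this lemma but simply refers to \cite[Proposition 3.6]{ledrappier.1981} and \cite[Proposition 32]{doob.2012}. Your sketch supplies the standard details behind those references --- convergence of $\Delta$ via the backward distortion bound, the telescoping cocycle identity, and the identification step via the equality case of the Pesin--Rochlin formula --- and correctly flags that the last step is the substantive one, borrowed as a black box from Ledrappier, exactly as the paper does.
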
 
See \cite[Proposition 3.6]{ledrappier.1981} and \cite[Proposition 32]{doob.2012}.

Let $\pi^*\bar\mu_{\Sigma}(A) = \bar\mu_{\Sigma}(\pi^{-1}(A))$ be the push-forward of $\bar\mu_{\Sigma}$, the 
restriction $\bar\mu|\Sigma$ normalized to a probability. The following shall be used later.

\begin{lemma}\label{lema.equivalencia.barmu.Vol}
Let $\Sigma$ be a rectangle with regular returns. Then, there are constants $C_0,C_1 > 0$ such that, 
\begin{equation}\label{equivalencia.barmu.Vol}
C_0\Vol(A) \leq \pi^*\bar\mu_{\Sigma}(A) \leq C_1\Vol(A) \quad\text{for every Borel subset} \ A \subset B.
\end{equation}
\end{lemma}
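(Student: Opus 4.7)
The plan is to disintegrate $\bar\mu_\Sigma$ along the Pesin partition $\xi$ and combine the conditional density formula of Lemma \ref{lema.densidad.condicional.barmu} with the uniform backward distortion (\ref{backward.distorsion.limitada.1}) built into the definition of a rectangle with regular returns. The key reduction is a plaque-by-plaque comparison: on each $\Sigma_z$, the density of $\bar\mu_{\xi(z)}$ relative to the pulled-back volume $\Vol_{\Sigma_z}$ oscillates within a factor that is uniform in $z$.

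Concretely, since $\xi(z) = \Sigma_z$ for every $z \in \Sigma$ and $\Vol_{\Sigma_z}$ is by definition the pullback of $\Vol|_B$ via the diffeomorphism $\pi|\Sigma_z : \Sigma_z \to B$, one has $\Vol_{\Sigma_z}(\pi^{-1}(A) \cap \Sigma_z) = \Vol(A)$ for every Borel $A \subset B$. Applying (\ref{backward.distorsion.limitada.1}) with $n=0$ (so $F^{-n}\Sigma_z = \Sigma_z$) to any pair $w,w' \in \Sigma_z$ yields, using $\pi w, \pi w' \in B$,
\begin{equation*}
\frac{\Delta(z,w)}{\Delta(z,w')} \;=\; \prod_{i=0}^{+\infty}\frac{Jf(\pi(F^{-i}w'))}{Jf(\pi(F^{-i}w))} \;\in\; [\kappa^{-1},\kappa], \qquad \kappa := \exp(C\,\diam B),
\end{equation*}
uniformly in $z \in \Sigma$, because the constant $C$ of (\ref{backward.distorsion.limitada.1}) is part of the \textbf{RRR} structure and does not depend on $z$. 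Substituting into the density formula of Lemma \ref{lema.densidad.condicional.barmu} gives
\begin{equation*}
\frac{\kappa^{-1}}{\Vol(B)}\,\Vol(A) \;\leq\; \bar\mu_{\xi(z)}\bigl(\pi^{-1}(A)\cap\Sigma_z\bigr) \;\leq\; \frac{\kappa}{\Vol(B)}\,\Vol(A)
\end{equation*}
for $\bar\mu$-a.e.\ $z \in \Sigma$.

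Finally, Rochlin's disintegration applied to the normalized restriction $\bar\mu_\Sigma = \bar\mu|\Sigma/\bar\mu(\Sigma)$ yields
\begin{equation*}
\pi^*\bar\mu_\Sigma(A) \;=\; \int_{\Sigma/\xi} \bar\mu_{\xi(z)}\bigl(\pi^{-1}(A)\cap\Sigma_z\bigr)\,d\nu(\xi(z)),
\end{equation*}
where $\nu$ is the induced quotient probability on $\Sigma/\xi$. Integrating the uniform pointwise inequalities establishes (\ref{equivalencia.barmu.Vol}) with $C_0 = \kappa^{-1}/\Vol(B)$ and $C_1 = \kappa/\Vol(B)$.

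I do not anticipate a genuine obstacle here: the lemma is essentially a packaging of pieces already assembled, namely the Pesin partition (Lemma \ref{lema.particion.pesin}), the explicit form of the conditional densities (Lemma \ref{lema.densidad.condicional.barmu}), and the uniform distortion inherent to \textbf{RRR}. The only point that deserves care is verifying that the distortion constant $C$ of (\ref{backward.distorsion.limitada.1}) is genuinely independent of the plaque, which is precisely what condition (3) in Definition \ref{definicion.rectangulo.retornos.regulares} guarantees; the tempered estimate of Lemma \ref{lema.distorsion.temperada} would not be sufficient by itself, since $\gamma_d$ depends on $z$.
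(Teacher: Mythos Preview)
Your argument is correct and follows essentially the same route as the paper: disintegrate $\bar\mu_\Sigma$ along the Pesin partition, use Lemma~\ref{lema.densidad.condicional.barmu} together with the uniform backward distortion bound~(\ref{backward.distorsion.limitada.1}) to control $\bar\mu_{\xi(z)}(\pi^{-1}(A))$ uniformly in $z$, and then integrate over $\Sigma/\xi$. The only cosmetic differences are that the paper records the distortion constant as $e^{\pm 2C\diam B}$ rather than $e^{\pm C\diam B}$ and keeps the factor $\bar\mu_\xi(\Sigma/\xi)/\bar\mu(\Sigma)$ explicit in the final constants, whereas you normalize the quotient measure from the outset.
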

\begin{proof}
Let $\{\bar\mu_{\xi(z)}\}$ be the Rochlin decomposition of $\bar\mu|\Sigma$ with respect to $\xi$: 
$$
\bar\mu(\pi^{-1}(A) \cap \Sigma) = \int_{\Sigma/\xi}\bar\mu_{\xi(z)}(\pi^{-1}(A))d\bar\mu_{\xi}(\xi(z)).
$$
Then, from (\ref{densidad.condicional.barmu}) in lemma \ref{lema.densidad.condicional.barmu} and (\ref{backward.distorsion.limitada.1}) in the definition 
of a rectangle with regular returns we have
$$
\dfrac{e^{-2C\diam(B)}\Vol(A)}{\Vol(B)} \leq \bar\mu_{\xi(z)}(\pi^{-1}(A)) \leq \dfrac{e^{2C\diam(B)}\Vol(A)}{\Vol(B)},
$$
for every Borel subset $A \subset B$. Therefore, using the Rochlin decomposition,
$$
C_0\Vol(A) \leq \bar\mu_{\Sigma}(\pi^{-1}(A)) \leq C_1\Vol(A),
$$
for every Borel subset $A \subset B$, where
$$
C_0 := \dfrac{e^{-2C\diam(B)}\bar\mu_{\xi}(\Sigma/\xi)}{\Vol(B)\bar\mu(\Sigma)} \quad\text{and}\quad C_1 :=  \dfrac{e^{2C\diam(B)}\bar\mu_{\xi}(\Sigma/\xi)}{\Vol(B)\bar\mu(\Sigma)}.
$$
\end{proof}

\section{Proof of theorem \ref{main.0}}\label{sec:demostracion.main.0}

Let $\Sigma \subset Z$ be a rectangle with regular returns and $\bar\mu(\Sigma) > 0$ and $\tau : \Sigma \to \Sigma$ the first return map, that is, 
$\tau(z) = F^{\overline{R}(z)}(z)$, where $\overline{R}(z) = \min\{n > 0: F^n(z) \in \Sigma\}$ is the first return times to $\Sigma$.

\begin{definition}
Let $\Sigma$ a rectangle. We say that a subset ${U} \subset \Sigma$ is a $u$-rectangle if is a rectangle and 
${U} \cap \Sigma_w = \Sigma_w$, for every $w \in {U}$.
\end{definition}

Briefly an $u$-rectangle ${U}$ is a \emph{'horizontal strip'} crossing $\Sigma$ from 'left to right'.

\begin{lemma}\emph{(Main technical lemma)}\label{lema.tecnico.principal.1}
There exists a decomposition of $\Sigma$ into subrectangles $S_j\subset \Sigma$ and $u$-rectangles $U_j \subset \Sigma$ and non negative 
integers $\overline{R}_j \in \enteros^+$ such that 
\begin{enumerate}
\item $\bar\mu(\Sigma - \bigcup_jS_j) = \bar\mu(\Sigma - \bigcup_jU_j) = 0$;
\item $F^{\overline{R}_j}: S_j\mapto U_j$ is a laminated diffeomorphism such that, $\tau|S_j = F^{\overline{R}_j}$ and
\begin{equation}\label{expansion.uniforme.2}
\|Df^{\overline{R}_j}(\pi(w))v\| \geq C^{-1}e^{\overline{R}_j\chi/3}\|v\|, \quad\forall \ v \in T_{\pi(w)}M, \ \forall \ w \in S_j(z)
\end{equation}
for every $z \in S_j$ and 
\begin{equation}\label{forward.distorsion.limitada.2}
\dfrac{Jf^{\overline{R}_j}(\pi(w))}{Jf^{\overline{R}_j}(\pi(w'))} \leq \exp(Cd(f^{\overline{R}_j}(\pi(w)),f^{\overline{R}_j}(\pi(w'))), \quad\forall \ w,w' \in S_j(z), \ z \in S_j,
\end{equation}
where $C > 0$ is the constant provided by lemma \ref{lema.cilindro.retornos.regulares} and $S_j(z) := S_j \cap \Sigma_z$ is the leave of level $z$ on the rectangle $R_j$;
\item for every $m > 0$, $\#\{j > 0 : \overline{R}_j = m \} < +\infty$.
\end{enumerate}
\end{lemma}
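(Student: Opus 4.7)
The plan is to construct the decomposition inductively by return time, extracting the subrectangles from the combinatorics of preimage plaques. For each $z \in \Sigma$ with $\bar{R}(z) = n < \infty$, the preimage plaque $F^{-n}\Sigma_{F^n(z)}$ sits as a sub-plaque of $\Sigma_z$ and projects via $\pi$ to an open sub-disk $B(z) := \pi(F^{-n}\Sigma_{F^n(z)}) \subset B$; by condition (3) of Definition \ref{definicion.rectangulo.retornos.regulares}, $f^n : B(z) \to B$ is a diffeomorphism satisfying (\ref{expansion.uniforme.1})--(\ref{backward.distorsion.limitada.1}). The ``disjoint or nested'' property of the \textbf{RRR} (condition (1)) forces the family $\{B(z)\}$ to be pairwise disjoint or nested in $B$, so after grouping base points sharing the same sub-disk at the same return level one extracts an at most countable family of combinatorial classes $(B_j, \bar{R}_j)_{j \geq 1}$ to serve as the bases and return times of the $S_j$'s.

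For each $j$ I define $S_j := \pi^{-1}(B_j) \cap \Sigma$, a subrectangle of $\Sigma$ over $B_j$ with full Cantor fiber $K$. The first nontrivial step is to verify that $\bar{R} \equiv \bar{R}_j$ on $S_j$ modulo $\bar\mu$-zero. Here I would use the regular rectangle structure of $\Sigma$---in particular the holonomy between its plaques---together with the disjoint-or-nested property to propagate the return combinatorics across all fiber levels: if one height at base $x \in B_j$ returns to $\Sigma$ at time $\bar{R}_j$, every other height over $x$ must also return at the same time, because their ``landing'' unstable plaques over $B$ are identified with plaques of $\Sigma$ via the nested preimage structure. Covering of $\Sigma$ by $\bigcup_j S_j$ up to $\bar\mu$-zero follows from Poincar\'e recurrence applied to $\bar\mu|\Sigma$, and covering by $\bigcup_j U_j$ follows from the $\bar\mu$-invariance of the first return map $\tau$.

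Setting $U_j := F^{\bar{R}_j}(S_j)$, I would show this is a u-rectangle of $\Sigma$ as follows. For each plaque $\Sigma_w \cap S_j$ of $S_j$, $F^{\bar{R}_j}$ expands it to an unstable plaque over $B$ in $Z$ (via the diffeomorphism $f^{\bar{R}_j} : B_j \to B$). Using that two distinct unstable plaques of the lamination are disjoint and the nested structure of \textbf{RRR} preimages, one concludes that this image plaque coincides with some plaque $\Sigma_{w'}$ of $\Sigma$; as $w$ ranges over the Cantor heights of $S_j$, the images sweep a Cantor family of heights in $\Sigma$, yielding the u-rectangle $U_j$ that crosses $\Sigma$ from base to base. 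The laminated-diffeomorphism character of $F^{\bar{R}_j} : S_j \to U_j$ is immediate, and the bounds (\ref{expansion.uniforme.2})--(\ref{forward.distorsion.limitada.2}) reduce to (\ref{expansion.uniforme.1})--(\ref{forward.distorsion.limitada.1}) (the slack between $\chi/2$ and $\chi/3$ leaves room to absorb constants).

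The finiteness in part (3) is the easiest: each $B_j$ with $\bar{R}_j = m$ satisfies $\Vol(B_j) \geq c_m \Vol(B)$ for a positive constant $c_m > 0$ given by the expansion and bounded distortion of $f^m : B_j \to B$, and distinct $B_j$'s at the same level $m$ are pairwise disjoint in $B$, so there are at most $c_m^{-1}$ of them. I expect the main obstacle to lie in the fiber-independence statement in paragraph 2---that the first return time on $\Sigma$ depends only on the base point $\pi(z) \in B$ and not on the fiber height $w \in K$. This is where the ``regular returns'' hypothesis is indispensable: without the disjoint-or-nested condition on the entire preimage family $\{F^{-n}\Sigma_z\}_{z \in \Sigma,\, n > 0}$ and the regular-rectangle holonomy, the first-return decomposition of $\Sigma$ would fail to have the product structure required to project downstairs to a Markov partition of $B$ with bounded distortion.
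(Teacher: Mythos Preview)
Your argument has a genuine gap precisely at the point you flagged as the main obstacle: the claim that the first return time $\bar R$ is constant on the full vertical strip $S_j := \pi^{-1}(B_j)\cap\Sigma$. This is false in general, and the paper explicitly remarks that the opposite assumption was a gap in an earlier work. Concretely, take $z,z'\in\Sigma$ with $\pi(z)=\pi(z')=x\in B_j$ and $\bar R(z)=m$. Then $F^m(z)\in\Sigma$ and $\pi(F^m(z'))=f^m(x)\in B$, but $F^m(z')$ lies in the fiber $\mathcal F(F^m(z))$ and there is no reason for it to land in the Cantor subset $K_{F^m(z)}=\Sigma\cap\mathcal F(F^m(z))$; it can perfectly well fall into a gap of $K_{F^m(z)}$. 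The disjoint-or-nested property and the holonomy of the regular rectangle control the \emph{horizontal} combinatorics of plaque preimages, not the membership of a forward iterate in the \emph{vertical} Cantor fiber of $\Sigma$. So your holonomy argument does not give fiber-independence of $\bar R$.

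The fix in the paper is to abandon fiber-independence entirely: one sets $S^m_i:=\pi^{-1}(B^m_i)\cap\Sigma_m$, intersecting with the level set $\Sigma_m=\{\bar R=m\}$ rather than with all of $\Sigma$. The regular-returns property does give that $\bar R\equiv m$ on each preimage plaque $F^{-m}\Sigma_{F^m(z)}$ for $z\in\Sigma_m$, so $S^m_i$ is a genuine (sub)rectangle, just not an $s$-rectangle; the statement of the lemma only asks for subrectangles. Since $S^m_i$ is a union of such preimage plaques, $U^m_i:=F^m(S^m_i)$ is a union of full plaques $\Sigma_{F^m(z)}$ and hence a $u$-rectangle. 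Your arguments for the estimates and for part~(3) are fine and match the paper's.
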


The next well known result will be used in the proof of theorem \ref{main.0}.
\ 
\\
\\
{\bf Kac's lemma} \ \emph{Let $T : (X,\mathcal{A},\mu) \to (X,\mathcal{A},\mu)$ be a measurable, measure preserving transformation of a probability space, 
$A \in \mathcal{A}$ a subset with $\mu(A) > 0$ and $R_A(x) = \inf\{n > 0: T^n(x) \in A\}$ the first return times to $A$. Then,
$$
\int_A{R}_Ad\mu = 1
$$
}
See \cite[Theorem 2.4.6]{petersen}.
\ 
\\
\begin{proof}[Proof of theorem \ref{main.0}]
\ 
\\
Let $\tau: \Sigma \to \Sigma$ the first return map to a rectangle with regular returns. Then, $\tau^n(z) = F^{\overline{R}_n(z)}(z)$, where
$$
\overline{R}_n(z) = \sum_{k=0}^{n-1}\overline{R}_0(\tau^k(z)), \quad\forall \ n \geq 0.
$$
\textbf{Claim}: for every $n > 0$ there exist decompositions $\mathcal{R}^n = \{S^n_j\}$ (resp. $\mathcal{U}^n = \{U^n_j\}$) of $\Sigma$ into subrectangles 
(resp. $u$-rectangles) and non-negative integers $\overline{R}^n_j$ such that $\overline{R}_n|S^n_j = \overline{R}^n_j$ and
$$
F^{\overline{R}_n}|S^n_j = F^{\overline{R}^n_j}: S^n_j \to U^n_j
$$
is a laminated diffeomorphism satisfying estimates (\ref{expansion.uniforme.2}) and (\ref{forward.distorsion.limitada.2}).

Indeed, by the main technical lemma, there exist collections of subrectangles $\mathcal{R}$ and $u$-rectangles $\mathcal{U}$ of first level and times of 
return $\overline{R}_j$ such that $\tau|S_j = F^{\overline{R}_j}: S_j \to U_j$ is a laminated diffeomorphism satisfying  (\ref{expansion.uniforme.2}) and 
(\ref{forward.distorsion.limitada.2}). This is the first step of an induction argument. Now, suppose that 
we have done the construction up to $n$-th step. For every $j > 0$, we label $S_{nij} := U_j \cap S^n_i$ the subrectangles 
obtained by intersecting $U_j$ with the subrectangles of $n$-th level in $\mathcal{R}^n$. The collection $\{S_{nij}\}$ covers $U_j$ up-to a 
$\bar\mu$-zero measure set. Then, we decompose $S^n_j$ into subrectangles $S^{n+1}_{ij} \subset S^n_j$ of $n+1$-th generation such that 
$\tau(S^{n+1}_{ij}) = S_{nij}$. Let $U^{n+1}_{ij} := \tau^n(S_{nij})$ be the $n$-th first return of $S_{nij}$. Then $U^{n+1}_{ij}$ is an $u$-rectangle 
contained in $U^n_i$ and $\tau^{n+1}(S^{n+1}_{ij}) = U^{n+1}_{ij}$ and 
$$
\overline{R}_{n+1}|S^{n+1}_{ij} = \overline{R}_j + \overline{R}^n_i := \overline{R}^{n+1}_{ij}.
$$ 
Estimates (\ref{expansion.uniforme.2}) and (\ref{forward.distorsion.limitada.2}) follows inmediatly from (\ref{expansion.uniforme.1}) and 
(\ref{forward.distorsion.limitada.1}) in definition \ref{definicion.rectangulo.retornos.regulares}.

Let $N > 0$ be the smallest positive integer such that 
\begin{equation}\label{tiempo.hiperbolico}
\lambda := C^{-1}e^{N\chi/3} > 1
\end{equation} 
and notice that $\overline{R}_N(z) \geq N$ for $\bar\mu$-a.e. $z \in \Sigma$. For every $j > 0$, 
$F^{\overline{R}^N_j}: S^N_j \to U^N_j$ projects onto a diffeomorphism covering $B$, that is,
$$
\begin{array}{ccc}
S^N_j         & \stackrel{F^{\overline{R}^N_j}}{\longrightarrow}& U^N_j\\
\pi\downarrow &                                                 & \downarrow \pi\\
B^{N}_j       & \stackrel{f^{\overline{R}^N_j}}{\longrightarrow}& B
\end{array}
$$
such that,
$$
\|Df^{\overline{R}^N_j}(x)v\| \geq \lambda\|v\|, \quad \forall \ v \in T_{x}M, \ \forall \ x \in B^{N}_j
$$
and 
$$
\dfrac{Jf^{\overline{R}^N_j}(x)}{Jf^{\overline{R}^N_j}(y)} \leq \exp(Cd(f^{\overline{R}^N_j}(x),f^{\overline{R}^N_j}(x)), \ \forall \ x,y \in B^{N}_j.
$$
In other words, $f^{\overline{R}^N_j}: B^{N}_j \to B$ is an expanding branch with bounded distortion. 

Notice that these branches are extendible: choose $z \in S_j$ and $w = F^{\overline{R}_j}(z) \in U_j$. The plaque 
$U_j(w) = U_j \cap \Sigma_w$ is contained in some local unstable manifold $W^u_{\alpha}(w')$ and therefore, the plaque of level $z$ in $S_j$,
$S_j(z) = S_j \cap \Sigma_z$ is contained in $F^{-\overline{R}_j}W^u_{\alpha}(w')$. By (\ref{expansion.uniforme.0}) and 
(\ref{forward.distorsion.limitada.0}), estimatives (\ref{expansion.uniforme.2}) and 
(\ref{forward.distorsion.limitada.2}) also holds for the extension $f^{\overline{R}_j}: \hat{B}_j \to \hat{B}$, where 
$$
\hat{B}_j = \pi(F^{-\overline{R}_j}W^u_{\alpha}(w')) \quad\text{and}\quad \hat{B} = \pi(W^u_{\alpha}(w')).
$$

From now on we drop the superscript $N$ on the return times $\overline{R}^N_j$, subrectangles $S^N_j$ and $u$-rectangles $U^N_j$ and denote 
$\mathcal{R} = \{S_j\}$ the family of rectangles of $N$-th generation. Notice that, by Kac's lemma
\begin{equation}\label{Kac.lemma.N}
\int_{\Sigma}\overline{R}d\bar\mu = N, 
\end{equation}
since $\overline{R} = \overline{R}_N$ is the return times of $\tau^N$ and $\bar\mu$ is 
$\tau$-invariant. Then, by by Rochlin's decomposition theorem,
$$
\int_{\Sigma}\overline{R}(w)d\bar\mu(w) = \int_{\Sigma/\xi}\left(\int_{\xi(z)}\overline{R}(w)d\bar\mu_{\xi(z)}(w)\right)\bar\mu(d\xi) < +\infty,
$$
consequently,
\begin{equation}\label{plaque.integrability}
\int_{\xi(z)}\overline{R}(w)d\bar\mu_{\xi(z)}(w) < +\infty,
\end{equation}
for $\bar\mu-\text{a.e.} \ z \in \Sigma$. Now we choose a family of rectangles
$$
\mathcal{R}(z) = \{S_i \in \mathcal{R}: S_i \cap \xi(z) \not= \emptyset\}.
$$
Moreover, for $\bar\mu$-a.e. $z \in \Sigma$, 
\begin{equation}\label{plaque.rectangles.covering}
\bar\mu_{\xi(z)}\left(\xi(z) - \bigcup_{S_j \in \mathcal{R}(z)}S_j\right) = 0.
\end{equation}
This follows from Rochlin's theorem since
$$
\bar\mu\left(\Sigma - \bigcup_jS_j\right) = \int_{\Sigma/\xi}\bar\mu_{\xi(z)}\left(\Sigma - \bigcup_jS_j\right)d\bar\mu_{\xi}(\xi(z)) = 0.
$$

Let $z$ be a point in $\Sigma$ satisfying (\ref{plaque.integrability}) and (\ref{plaque.rectangles.covering}) and let $f^{R_i}: B_i \to B$ be the family of 
branches so chosen. Then $f^R : \bigcup_iB_i \to B$ is an induced expanding Markov map with bounded distortion and integrable return map. Indeed, 
as we argue in lemma \ref{lema.equivalencia.barmu.Vol} there exists a constant $K_0 > 0$ such that $Vol_{\xi(z)}(A) \leq K_0\bar\mu_{\xi(z)}(A),$ 
for every Borel subset $A \subset \xi(z)$. Therefore,
\begin{eqnarray*}
 \Vol\left(B - \bigcup_iB_i\right) & = & Vol_{\xi(z)}\left(\xi(z) - \bigcup_{S_i \in \mathcal{R}(z)}S_i\right) \\
                           & \leq & K_0\bar\mu_{\xi(z)}\left(\xi(z) - \bigcup_{S_i \in \mathcal{R}(z)}S_i\right)\\
                           & = & 0.
\end{eqnarray*}
Let $R$ be the return times of the induced map $f^R : \bigcup_iB_i \to B$. Then $R(\pi(w)) = \overline{R}(w)$ for every $w \in \xi(z)$ and thus, 
by (\ref{plaque.integrability}) and lemma \ref{lema.equivalencia.barmu.Vol},
\begin{eqnarray*}
 \int_B{R}(x)d\Vol(x) & = & \int_{\xi(z)}\overline{R}(w)d\Vol_{\xi(z)}(w)\\
                            & \leq & K_0\int_{\xi(z)}\overline{R}(w)d\bar\mu_{\xi(z)}(w)\\
                            &  <   & +\infty.
\end{eqnarray*}
\end{proof}

The next arguments are due to Doob. See \cite[Proposition 34]{doob.2012}. The domains $\{\pi(S_j): S_j \in \mathcal{R}\}$ associatesd to 
$\tau^N : \Sigma \itself$ are partially ordered by inclusion, since $\pi(S_j)$ are disjoint or nested. Then there exists a maximal family 
$\{B_i\}$ of pairwise disjoint open domains and associated extendable expanding branches with bounded distorsion $f^{R_i} : B_i \to B$. We 
call such $f^{R}: \bigcup_iB_i \to B$ a \emph{maximal induced map}. Next proposition proves that a maximal induced map is a generating 
GMY structure for $\mu$.

\begin{proposition}\label{maximal.induced.map}
Let $f^R: \bigcup_iB_i \to B$ be a maximal induced map. Then, $\Vol\left(B - \bigcup_iB_i\right) = 0$ and $R$ is Lebesgue 
integrable. 
\end{proposition}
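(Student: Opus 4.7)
The plan is to combine the disjoint-or-nested structure of the family $\mathcal{F} := \{\pi(S_j): S_j \in \mathcal{R}\}$ of $N$-th generation branch domains with the Kac-Rochlin integrability estimate already exploited in the proof of Theorem \ref{main.0}.

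For the covering $\Vol(B - \bigcup_i B_i) = 0$, I would first observe that by maximality of the disjoint family $\{B_i\} \subseteq \mathcal{F}$, every element $\pi(S_j) \in \mathcal{F}$ must be nested with some $B_i$: either $\pi(S_j) \subseteq B_i$, $\pi(S_j) \supseteq B_i$, or $\pi(S_j) \in \{B_i\}$—otherwise $\pi(S_j)$ would be disjoint from every chosen $B_i$ and could be added to the family, contradicting maximality. Applied to the sub-collection $\mathcal{R}(z) \subseteq \mathcal{R}$ arising in the proof of Theorem \ref{main.0}, whose projections cover $B$ up to $\Vol$-zero (by Rochlin's decomposition for $\bar\mu|\Sigma$ combined with Lemma \ref{lema.equivalencia.barmu.Vol}), this forces $\bigcup_i B_i$ to cover $B$ up to $\Vol$-zero as well.

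For the integrability $\int_B R\, d\Vol < +\infty$, each maximal $B_i = \pi(S^N_{j(i)})$ has return time $R_i = \bar R^N_{j(i)}$. By Kac's lemma applied to the first return $\tau : \Sigma \to \Sigma$ and $\tau$-invariance of $\bar\mu|\Sigma$, one has $\int_\Sigma \bar R_N \, d\bar\mu = N < +\infty$, as established by equation (\ref{Kac.lemma.N}) in the proof of Theorem \ref{main.0}. Rochlin's disintegration then yields $\int_{\xi(z)} \bar R_N \, d\bar\mu_{\xi(z)} < +\infty$ for $\bar\mu$-a.e. $z \in \Sigma$. Fixing such a $z$, using $\pi | \xi(z) : \xi(z) \to B$ as a diffeomorphism and invoking Lemma \ref{lema.equivalencia.barmu.Vol} to compare $\pi_* \bar\mu_{\xi(z)}$ with the Riemannian volume on $B$, I conclude
\[
\int_B R \, d\Vol \; = \; \sum_i R_i \Vol(B_i) \; \leq \; K \int_{\xi(z)} \bar R_N \, d\bar\mu_{\xi(z)} \; < \; +\infty .
\]

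The main obstacle lies in the covering step, specifically in the subtle case where an ancestor $\pi(S_j) \in \mathcal{F}$ strictly contains several chosen $B_i$'s whose union fails to exhaust $\pi(S_j)$ up to $\Vol$-zero, producing a residual gap containing, say, a Lebesgue density point of the uncovered set. Ruling this out requires refining $\pi(S_j)$ through the tree of sub-rectangle descendants $\pi(S^n_{\cdots})$ (whose diameters shrink exponentially under the expansion provided by (\ref{expansion.uniforme.2})), and arguing by maximality that any gap of positive $\Vol$ would produce a descendant disjoint from all current $B_i$'s and hence admissible for adjunction, contradicting maximality.
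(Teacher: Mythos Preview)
Your integrability argument has a genuine gap. You write $\sum_i R_i \Vol(B_i) \leq K \int_{\xi(z)} \overline{R}_N \, d\bar\mu_{\xi(z)}$, but this requires $R_i \leq (\text{const})\cdot \overline{R}_N(w)$ for every $w \in \xi(z)$ with $\pi(w) \in B_i$. Such a $w$ lies in some $S_k \in \mathcal{R}(z)$ with $\pi(S_k) \subseteq B_i$, and $\overline{R}_N(w) = \overline{R}_k$; there is no a priori reason why the return time $R_i = \overline{R}^N_{j(i)}$ of the \emph{maximal} domain $B_i$ should be dominated by the return time $\overline{R}_k$ of a \emph{smaller} nested domain $\pi(S_k)$. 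Indeed the paper explicitly remarks (just after the proposition) that in the non-conformal setting one \emph{cannot} expect $R_i = \inf_{B_j \subset B_i}\overline{R}_j$. The paper fills exactly this gap by invoking the two-sided expansion estimate (\ref{expansion.local.f^n}) to compare diameters: $\diam(B_j) \geq \beta_0^{-1}e^{-2\chi^+\overline{R}_j}\diam(B)$ and $\diam(B_i) \leq \beta_0 e^{-\chi\overline{R}_i/2}\diam(B)$, whence $B_j \subset B_i$ forces $R_i \leq (4\log\beta_0 + 4\chi^+\overline{R}_j)/\chi$. Only then can one integrate, and the paper does so against $\pi^*\bar\mu_{\Sigma}$ directly (via Lemma \ref{lema.equivalencia.barmu.Vol} and Kac), rather than along a single plaque.

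For the covering part, your density/descendant-refinement argument is more elaborate than needed. The paper simply uses Lemma \ref{lema.equivalencia.barmu.Vol} to replace $\Vol$ by $\pi^*\bar\mu_{\Sigma}$ and observes that $\bigcup_i \pi^{-1}(B_i)$ covers $\Sigma$ up to $\bar\mu$-zero because every $S_j$ satisfies $\pi(S_j) \subseteq B_i$ for some $i$. Your ``obstacle'' (an ancestor $\pi(S_j) \supsetneq B_i$) does not arise once one notes that every ascending chain in $\{\pi(S_j)\}$ is finite: larger diameter forces smaller $\overline{R}_j$ by (\ref{expansion.local.f^n}), and for each $m$ there are only finitely many rectangles with $\overline{R}_j = m$ (item (3) of Lemma \ref{lema.tecnico.principal.1}). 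Hence the maximal family consists of the maximal elements of the inclusion order, and every $\pi(S_j)$ sits below one of them.
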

\begin{proof}
By lemma \ref{lema.equivalencia.barmu.Vol} $\Vol|B$ is equivalent to $\pi^*\bar\mu_{\Sigma}$. Then,
$$
\Vol\left(B - \bigcup_iB_i\right) = \bar\mu_{\Sigma}\left(\Sigma - \bigcup_i\pi^{-1}(B_i)\right) = 0,
$$
since, by the disjoint or nested property of the rectangle $\Sigma$, 
$$
\pi^{-1}(B_i) = \bigcup_{S_j \subset \pi^{-1}(B_i)}S_j,
$$
$\bar\mu$-modulo zero and therefore, by the maximality of $\{B_i\}$, $\bigcup_i\pi^{-1}(B_i)$ covers $\Sigma$, up-to a $\bar\mu$-measure zero set.

On the other side, by the expansion estimate (\ref{expansion.local.f^n}) and the choice of a compact subset $\Sigma_0$ in the construction of $\Sigma$, 
there exists $\beta_0 > 0$ such that
$$
\dfrac{1}{\beta_0}e^{\overline{R}_j\chi/2}d(x,y) \leq d(f^{\overline{R}_j}(x),f^{\overline{R}_j}(y)) \leq \beta_0e^{\overline{R}_j2\chi^+}d(x,y),
$$
$\forall \ x,y \in B_j$, for every expanding branch $f^{\overline{R}_j}: B_j \to B$, where $\overline{R}_j = \overline{R}_N|S_j$ and $B_j = \pi(S_j)$. 
Therefore,
$$
\dfrac{1}{\beta_0}e^{-\overline{R}_j2\chi^+}\diam(B) \leq \diam(B_j) \leq \beta_0e^{-\overline{R}_j\chi/2}.
$$
Thus, $B_j \subset B_i$ whenever
$$
\dfrac{1}{\beta_0}e^{-\overline{R}_j2\chi^+} \leq \beta_0e^{-\overline{R}_i\chi/2}.
$$
Hence, if $f^{R_i}: B_i \to B$ belongs to a maximal family, then
$$
R_i \leq \inf_{B_j \subset B_i}\dfrac{4\log(\beta_0) + 4\chi^+\overline{R}_j}{\chi}.
$$
Equivalently, let $R: B \to \enteros^+$ such that $R|B_i = R_i$, then
\begin{equation}\label{R.bounds}
R(x) \leq \inf_{z \in \Sigma: \pi(z) = x}\dfrac{4\log(\beta_0) + 4\chi^+\overline{R}_N(z)}{\chi}, \quad\forall \ x \in B_i,
\end{equation}
since $\pi^{-1}(B_i) = \bigcup_{B_j \subset B_i}S_j$, up-to a $\bar\mu$-zero measure set, by the 'disjoint or nested' property. 
Thus, by lemma \ref{lema.equivalencia.barmu.Vol},
\begin{eqnarray*}
\sum_iR_i\Vol(B_i) &   =  & \int_B{R}(x)d\Vol(x)\\
                   & \leq & C_0^{-1}\int_B{R}(x)d\pi^*\bar\mu_{\Sigma}(x)\\
                   & =    & C_0^{-1}\int_{\Sigma}{R}(\pi(z))d\bar\mu_{\Sigma}(z)\\
                   & \leq & C_0^{-1}\int_{\Sigma}\dfrac{4\log(\beta_0) + 4\chi^+\overline{R}_N(z)}{\chi}d\bar\mu_{\Sigma}(z)\\
                   &   =  & C_0^{-1}\dfrac{4\log(\beta_0)}{\chi} + 4\dfrac{\chi^+}{\chi}\dfrac{N}{\bar\mu(\Sigma)}\\
                   &  <   & +\infty,
\end{eqnarray*}
by (\ref{Kac.lemma.N}).
\end{proof}

\begin{remark}
Let $f^R: \bigcup_iB_i \to B$ be a maximal induced Markov map for a rational map of the Riemann sphere. It is proved in \cite[Proposition 34]{doob.2012} 
that
$$
R(x) = \inf_{z \in \Sigma: \pi(z) = x}\overline{R}(z)
$$
This is a consequence of the conformal character of $f$. In particular, $R_i = \inf_{B_j \subset B_i}\overline{R}_j$ and therefore, those 
maximal induced Markov maps are natural or naturally extendible. See \cite[Lemma 2]{bruin.1995}. Nevertheless, we notice that our maximal 
induced Markov maps are not necessarily natural neither naturally extendible. So far if $f^{R_i}: B_i \to B$ is a branch of a 
maximal induced Markov map then there might be $0 < k < R_i$ and subsets $A \subset B_i$ such that $f^k: A \to B$ is a diffeomorphism, in other 
words, (\ref{R.bounds}) do not exclude the possibility of earlier returns. This seems to be a consequence of the non conformal character of 
$f$ and the non-uniformity of expansion. Notice that this may be the case even if $f$ is asymptotically conformal, that is, if all its Lyapunov 
exponents are equal.
\end{remark}

\begin{proof}[Proof of main technical lemma \ref{lema.tecnico.principal.1}]
Let us denote ${\Sigma}_m = \{ z \in \Sigma: \overline{R}(z) = m\}$. Then, we claim that there exists a finite collection of sub-rectangles 
$S^m_i \subset \Sigma$ such that 
$$
{\Sigma}_m = \bigcup_iS^m_i \quad\bar\mu-\text{modulo zero.}
$$
and $u$-rectangles $U^m_i$ such that $F^m: S_i^m \to U_i^m$ is a laminated diffeomorphism satisfying (\ref{expansion.uniforme.2}) and 
(\ref{forward.distorsion.limitada.2}) in the horizontal plaques $S^m_i(z)$.

The proof of the lemma follows since $\Sigma = \bigcup_{m > 0}\bigcup_iS^m_i$, $\bar\mu$-modulo zero. We get a family of laminated diffeomorphisms 
satisfying (\ref{expansion.uniforme.2}) and (\ref{forward.distorsion.limitada.2}) by relabeling the family $\{F^m : S^m_i \to U^m_i; m > 0, i > 0\}$.

By the property of regular returns, if $\overline{R}(z) = m$ then $\overline{R}(w) = m$ for every 
$w \in F^{-m}\Sigma_{F^m(z)}$.  In particular, 
$$
{\Sigma}_m = \bigcup_{z \in {\Sigma}_m}F^{-m}\Sigma_{F^m(z)}.
$$
However $\{\pi{F^{-m}\Sigma_{F^m(z)}}: z \in {\Sigma}_m\} = \{B^m_i\}$  is an at most countable family of open disjoint subsets. Indeed, if 
$\pi{F^{-m}\Sigma_{F^m(z')}} \subset \pi{F^{-m}\Sigma_{F^m(z)}}$ then $\pi{F^{-m}\Sigma_{F^m(z')}} = \pi{F^{-m}\Sigma_{F^m(z)}}$ since 
$f^m|\pi{F^{-m}\Sigma_{F^m(z)}}$ and $f^m|\pi{F^{-m}\Sigma_{F^m(z')}}$) are diffeomorphisms onto $B$. Otherwise, there would be a contradiction 
since $f^m|\pi{F^{-m}\Sigma_{F^m(z)}}$ is one-to-one.

Therefore, ${\Sigma}_m$ is a countable disjoint union of rectangles $S^m_i := \pi^{-1}(B^m_i) \cap \Sigma_m$, that is,
$$
{\Sigma}_m = \bigcup_{i: \bar\mu(S^m_i) > 0}S^m_i \quad\bar\mu-\text{modulo zero}.
$$ 

By the bounded distortion property there is $C(m) > 0$ such that $\Vol(B^m_i) \geq C(m)$ for every $i$, then there are at most finite $i$ for each $m>0$. 
Actually, there exists a constant $C > 0$ such that 
$$
e^{-C} \leq \Vol(B^m_i)Jf^m(x) \leq e^C
$$
Therefore, for every $i > 0$,
$$
\Vol(B^m_i) \geq C(n) := e^{-C}\sup_{x \in M}Jf(x)^{-m} > 0,
$$
so there are at most finitely many open sets $B^m_i$'s. 

We define $U^m_i := F^m(S^m_i)$, where $S^m_i$ has $\bar\mu(S^m_i) > 0$. As $S^m_i = \bigcup_{z \in S^m_i}F^{-m}\Sigma_{F^m(z)}$ then $U^m_i$ is an $u$-rectangle and 
$F^m: S^m_i \to U^m_i$ a laminated diffeomorphism. Estimatives (\ref{expansion.uniforme.2}) and (\ref{forward.distorsion.limitada.2}) follows 
from (\ref{expansion.uniforme.1}) and (\ref{forward.distorsion.limitada.1}) in the definition of rectangles with regular returns, since 
$F^{-m}\Sigma_{F^m(z)}$ is precisely the horizontal plaque $S^m_i(z)$ of height $z$ in $S^m_i$ and $\Sigma_{F^m(z)}$ the plaques of $U^m_i$ 
of level $F^m(z)$. To conclude, we simply relabel the family 
$$
\{F^m : S^m_i \to U^m_i: \ \bar\mu(S^m_i) > 0, \ m > 0, \ i > 0\}
$$
to get a family of hyperbolic branches $\{F^{\overline{R}_j}: S_j \to U_j\}$ as claim lemma \ref{lema.tecnico.principal.1} to exists.
\end{proof}

\begin{remark}
 Main technical lemma corrects a gap in the statement and proof of \cite[Lemma 4.3]{sanchez.salas.2003} which, essentially, amounts to say that
 $$
 \pi^{-1}(B^m_i) \cap \Sigma_m = \pi^{-1}(B^m_i) \cap \Sigma,
 $$
 that is, every point $z$ in the 'vertical' rectangle $\mathcal{U}_i^m = \pi^{-1}(B^m_i) \cap \Sigma$ returns for the first time to $\Sigma$ after $m$ 
 iterates. However, we can not exclude the possibility that some points $z \in \mathcal{U}_i^m$ return later to $\Sigma$. Indeed the image 
 of a vertical fiber $K_i = \mathcal{U}_i^m \cap \mathcal{F}(z)$ might not match the vertical fiber $K_{F^m(z)} = \Sigma \cap \mathcal{F}(F^m(z))$, even if 
 $F^m(z) \in \Sigma$, since $F^m(w)$ could fall into a gap of $K_{F^m(z)}$. All what we know for certain is that the set 
 $S^m_i := \pi^{-1}(B^m_i) \cap \Sigma_m$ is a rectangle with positive $\bar\mu$-measure, contained in the vertical strip $\mathcal{U}_i^m$.
 
 However, as we will show in next section, $\tau$ still has the structure of a horseshoe with infinitely many branches and 
 variable return times which projects onto an induced Markov map, as stated in \cite[Lemma 4.3]{sanchez.salas.2003}, even though points in the 
 branches of the horseshoe return for the first time at different moments. We explain this in next section.
\end{remark}

\section{First return map is a horseshoe with infinitely many branches}\label{sec:herraduras} 

\begin{definition}
We say that $\mathcal{S} \subset \Sigma$ is a $s$-rectangle if it is a rectangle over an open set $\pi(\mathcal{S}) \subset B$ and 
$K_z \cap \mathcal{S} = K_z$, where $K_z := \Sigma \cap \mathcal{F}(z)$ is the vertical fiber of $\Sigma$ passing by $z$, 
$\mathcal{F}(z) = \pi^{-1}(\pi(z))$.
\end{definition}

In other words, $\mathcal{S} \subset \Sigma$ is a \emph{a 'vertical strip' crossing $\Sigma$ from 'top to bottom'}.

We denote $\mathcal{S}(z)$ the plaque of 'height' $z \in \Sigma$ of a $s$-rectangle $\mathcal{S}$:
$\mathcal{S}(z) := \mathcal{S} \cap \Sigma_z$. Likewise for $u$-rectangles. 

A hyperbolic branch is a laminated map $\phi : \mathcal{S} \to \mathcal{U}$ which expands uniformly horizontal leaves of 
$\mathcal{S}$ and contracts uniformly the vertical fibers. More precisely,

\begin{definition}
Let $\mathcal{S}$ (resp. $\mathcal{U}$) be an $s$-rectangle (resp. $u$-rectangle) of $\Sigma$. A laminated $C^2$ map 
$\phi : \mathcal{S} \to \mathcal{U}$ is a \textbf{multivalued hyperbolic branch} with bounded distortion if there exists constants 
$C > 0$ and $\lambda > 1$ such that: 
\begin{enumerate}
\item for every $z \in \mathcal{S}$ there is an open subset $R(z) \subset \mathcal{S}(z)$ such that  
$\phi(R(z)) = \mathcal{U}({F^{n}(z)})$ and 
\begin{equation}\label{expansion.uniforme.3}
\|D\phi(w)v\| \geq C^{-1}\lambda\|v\|, \quad \forall \ v \in T_{w}\mathcal{S}(z), \ \forall \ w \in R(z);
\end{equation}
\item let $J\phi(z)$ the Jacobian of $\phi|\mathcal{S}(z)$ with respect to the volume element on $\Sigma_z$ given by the parametrization
$\Phi_z : B \to \Sigma_z$. Then,
\begin{equation}\label{forward.distorsion.limitada.3}
\dfrac{J\phi(w)}{J\phi(w')} \leq \exp(Cd(\phi(w),\phi(w')), \quad \forall \ w,w' \in R(z), \ \forall \ z \in \mathcal{S};
\end{equation}
\end{enumerate}
\item $\phi$ contracts exponentially the fibers $K_z \subset \mathcal{S}$.
\end{definition} 
 
\begin{remark}
$\phi : \mathcal{S} \to \mathcal{U}$ is \emph{'multivalued'} in the following sense: every horizontal plaque $\mathcal{S}(z)$ is decomposed 
into at most countably many open disjoint $R_i \subset \mathcal{S}(z)$ such that for each one of them is mapped onto a horizontal plaque 
$\Sigma_{i}$.
\end{remark}

The following result clarifies the structure of the first return transformation to $\Sigma$. 

\begin{proposition}\label{lema.herradura}\emph{(Horseshoe lemma)}
Let $\tau: \Sigma \itself$ be the first return map to a rectangle with regular returns $\Sigma$. Then, there exists a decomposition of $\Sigma$ into 
countably many disjoint $s$-rectangles $\mathcal{S}_i$ and $u$-rectangles $\mathcal{U}_i$ and multivalued hyperbolic branches 
$\phi_i : \mathcal{S}_i  \to \mathcal{U}_i$ such that
$$
\bar\mu\left(\Sigma - \bigcup_i\mathcal{S}_i\right) = \bar\mu\left(\Sigma - \bigcup_i\mathcal{U}_i\right) = 0
$$
and $\tau|\mathcal{S}_i = \phi_i$, that is, $\tau$ has the structure of a 'horseshoe with infinitely many branches'.
\end{proposition}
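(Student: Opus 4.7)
The plan is to combine the main technical lemma with the maximality argument used in proposition \ref{maximal.induced.map} to assemble the countable family of subrectangles $\{S_j\}$ into genuine $s$-rectangles, which are complete vertical strips of $\Sigma$. First I would consider the collection of projections $B_j := \pi(S_j)$ arising from lemma \ref{lema.tecnico.principal.1}. By the regular returns property these are pairwise disjoint or nested; as in the proof of proposition \ref{maximal.induced.map} I select a maximal pairwise disjoint subfamily $\{B_i\}_{i \in I}$, indexed by an at most countable set, whose union covers $B$ up to volume zero. For each $i$ set $\mathcal{S}_i := \pi^{-1}(B_i) \cap \Sigma$; this is an $s$-rectangle over $B_i$ by construction, and the $\mathcal{S}_i$'s are pairwise disjoint. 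Coverage $\bar\mu(\Sigma - \bigcup_i \mathcal{S}_i) = 0$ follows from lemma \ref{lema.equivalencia.barmu.Vol}, which gives $\pi^*\bar\mu_{\Sigma}$ equivalent to Lebesgue on $B$, combined with maximality of $\{B_i\}$.

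Next I would define $\mathcal{U}_i := \tau(\mathcal{S}_i)$. Since $\tau$ is injective as a first return map of the invertible transformation $F$, and each $U_j = F^{\overline{R}_j}(S_j)$ is a $u$-rectangle crossing $\Sigma$ horizontally, the $U_j$'s with $B_j \subset B_i$ are pairwise disjoint and their union $\mathcal{U}_i = \bigcup_{j: B_j \subset B_i} U_j$ is itself a $u$-rectangle of $\Sigma$. Coverage $\bar\mu(\Sigma - \bigcup_i \mathcal{U}_i) = 0$ then follows from $\tau$-invariance of $\bar\mu$, since $\tau(\bigcup_i \mathcal{S}_i)$ differs from $\Sigma$ by a $\bar\mu$-null set.

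Finally I would verify that $\phi_i := \tau|\mathcal{S}_i$ is a multivalued hyperbolic branch from $\mathcal{S}_i$ to $\mathcal{U}_i$. For every $z \in \mathcal{S}_i$ let $S_j$ be the unique subrectangle of lemma \ref{lema.tecnico.principal.1} containing $z$, with $B_j \subset B_i$, and put $R(z) := S_j(z)$. Then the plaque $\mathcal{S}_i(z)$ decomposes modulo $\bar\mu$-zero as the disjoint union of the $S_j(z)$'s over those $j$ with $B_j \subset B_i$, and $\phi_i(R(z)) = F^{\overline{R}_j}(S_j(z)) = \Sigma_{F^{\overline{R}_j}(z)}$, a complete horizontal plaque of $\mathcal{U}_i$. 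The horizontal expansion and bounded distortion conditions (\ref{expansion.uniforme.3}) and (\ref{forward.distorsion.limitada.3}) are inherited directly from (\ref{expansion.uniforme.2}) and (\ref{forward.distorsion.limitada.2}), setting $\lambda := e^{\chi/3}$ and using that $\overline{R}_j \geq 1$. Exponential contraction of the vertical Cantor fibers $K_z$ follows from a direct computation with the $Z$-metric: if $z, z'$ lie in the same vertical fiber of $Z$ then $d(F^n(z), F^n(z')) = 2^{-n} d(z, z')$, so each iterate of $F$, and hence $\phi_i$ whose return time is at least $1$, contracts vertical fibers by a factor of at least $1/2$. The main subtlety I anticipate is verifying that the $\mathcal{U}_i$'s assemble into genuine $u$-rectangles and that the multivalued structure is consistent across horizontal plaques; both reduce to unpacking the piecewise structure given by lemma \ref{lema.tecnico.principal.1}, together with the disjoint-or-nested property inherited from the regular returns on $\Sigma$.
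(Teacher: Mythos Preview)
Your approach is essentially identical to the paper's: both use the main technical lemma to get the pieces $S_j \to U_j$, take the maximal family $\{B_i\}$ from proposition \ref{maximal.induced.map}, set $\mathcal{S}_i = \pi^{-1}(B_i)\cap\Sigma$ and $\mathcal{U}_i = \bigcup_{B_j\subset B_i} U_j$, and read off (\ref{expansion.uniforme.3})--(\ref{forward.distorsion.limitada.3}) from (\ref{expansion.uniforme.2})--(\ref{forward.distorsion.limitada.2}).

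The one place your sketch is thinner than the paper is the vertical contraction. Your identity $d(F^n(z),F^n(z')) = 2^{-n}d(z,z')$ for $z,z'$ in the same fiber is correct, but it does not immediately give contraction for $\phi_i$, because two points $w,w'$ in the same fiber $K_z\subset\mathcal{S}_i$ may lie in \emph{different} $S_j$'s and hence have different first return times $\overline{R}(w)\neq\overline{R}(w')$; then $\phi_i(w)=F^{\overline{R}(w)}(w)$ and $\phi_i(w')=F^{\overline{R}(w')}(w')$ are not related by a common power of $F$, and in fact need not lie on the same vertical fiber of $\Sigma$. The paper handles this by passing to iterates $\tau^n$ and using $\overline{R}_n\geq n$ together with the bound $d(\tau^n(w),\tau^n(w'))\leq 2^{-n}\diam(Z)$, which is what you should invoke here.
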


\begin{remark}
 Notice that the hyperbolic branches $\phi_i : \mathcal{S}_i \to \mathcal{U}_i$ do not expand strictly in the horizontal direction, due to the presence of a 
 constant $C^{-1}$ in (\ref{expansion.uniforme.3}). However, as we argued in the proof of theorem \ref{main.0}, we can choose a suitable iterate $\tau^N$ 
 endowed with a decomposition into countably many multivalued hyperbolic branches mapping 'vertical' rectangles $\mathcal{S}_i$ onto 'horizontal' rectangles 
 $\mathcal{U}_i$ in $\Sigma$ with good distortion which expands uniformly the horizontal plaques.
\end{remark}

\begin{proof}
By lemma \ref{lema.tecnico.principal.1}, $\tau$ is the union of a countable family of laminated diffeomorphisms $F^{\overline{R}_j}: S_j\mapto U_j$, 
satisfying (\ref{expansion.uniforme.3}) and (\ref{forward.distorsion.limitada.3}) with $\lambda = e^{-\chi/3}$. Let $f^R: \bigcup_iB_i \to B$ 
be a maximal induced Markov map and define 
$$
\mathcal{S}_i = \pi^{-1}(B_i) \cap \Sigma.
$$
By the main technical lemma each $\mathcal{S}_i$ has a decomposition $\mathcal{S}_i = \bigcup\{S_j: S_j \subset \mathcal{S}_i\}$, $\bar\mu$-modulo zero.
Let $\mathcal{U}_i = \bigcup\{U_j: \pi(S_j) \subset B_i\}$ and define
$$
\phi_i : \mathcal{S}_i \to \mathcal{U}_i \quad\text{as}\quad \phi_i|S_j = F^{\overline{R}_j}: S_j \to U_j.
$$
Then, $\mathcal{S}_i$ (resp. $\mathcal{U}_i$ is a $s$-rectangle (resp. $u$-rectangle) and clearly $\tau = \bigcup_i\phi_i$. 
By (\ref{expansion.uniforme.2}) and (\ref{forward.distorsion.limitada.2}), the maps $\phi_i : \mathcal{S}_i \to \mathcal{U}_i$ satisfy 
(\ref{expansion.uniforme.3}) and (\ref{forward.distorsion.limitada.3}), up-to parametrizations of the horizontal plaques.

To prove that each $\phi_i$ contracts exponentially the vertical fibers in $\Sigma$ we notice that, by a simple calculation, for every $n > 0$, 
$d(F^n(w),F^n(w')) \leq 1/2^nd(w,w')$, for every $w,w' \in \mathcal{F}(z)$. Now, let $w,w' \in K_z$ be two points on the same vertical fiber $K_z = \mathcal{F}(z) \cap \Sigma$. Then,
\begin{eqnarray*}
 d(\tau^n(w),\tau^n(w')) &   =  & d\left(F^{\overline{R}_n(w)}(w), F^{\overline{R}_n(w')}(w')\right)\\
                         & \leq & 2^{-\overline{R}_n(w)}d\left(w, F^{\overline{R}_n(w')-\overline{R}_n(w)}(w')\right)\\
                         & \leq & 2^{-n}\diam(Z)
\end{eqnarray*}
since $\overline{R}_n \geq n$. That is, $d(\tau^n(w),\tau^n(w')) \to 0$ exponentially, for every pair of points $w,w' \in K_z$ at the same vertical fiber 
of $\Sigma$. As $\tau|\mathcal{S}_i = \phi_i$ this shows that $\phi_i$ contracts vertical fibers exponentially.
\end{proof}

\end{document}